\documentclass[letterpaper,11pt,oneside,reqno]{amsart}

\usepackage{xcolor}

\usepackage{bm}
\usepackage{mathrsfs}
\usepackage{yfonts}
\usepackage{amsfonts,amsmath, amssymb,amsthm,amscd}
\usepackage[height=9.6in,width=5.95in]{geometry}
\usepackage[mpexclude,DIV13]{typearea}
\usepackage{verbatim}
\usepackage{hyperref}
\usepackage{graphicx}
\usepackage[latin1]{inputenc}
\usepackage{latexsym}
\usepackage{lscape}
\usepackage{epsfig}
\usepackage{subfigure}
\include{bibtex}

\usepackage{setspace}

\DeclareGraphicsRule{.tif}{png}{.png}{`convert #1 `basename
#1.tif`.png}

\begin{document}
\bibliographystyle{alpha}
\newcommand{\cn}[1]{\overline{#1}}
\newcommand{\e}[0]{\epsilon}

\newcommand{\ZOY}[3]{\ensuremath{\mathcal{Z}^{#1}_{#2}(#3)}}

\newcommand{\whitenoise}{\ensuremath{\mathscr{\dot{W}}}}

\newcommand{\EE}{\ensuremath{\mathbb{E}}}
\newcommand{\PP}{\ensuremath{\mathbb{P}}}
\newcommand{\var}{\textrm{var}}
\newcommand{\N}{\ensuremath{\mathbb{N}}}
\newcommand{\R}{\ensuremath{\mathbb{R}}}
\newcommand{\C}{\ensuremath{\mathbb{C}}}
\newcommand{\Z}{\ensuremath{\mathbb{Z}}}
\newcommand{\Q}{\ensuremath{\mathbb{Q}}}
\newcommand{\T}{\ensuremath{\mathbb{T}}}
\newcommand{\E}[0]{\mathbb{E}}
\newcommand{\OO}[0]{\Omega}
\newcommand{\F}[0]{\mathfrak{F}}
\def \Ai {{\rm Ai}}
\newcommand{\G}[0]{\mathfrak{G}}
\newcommand{\ta}[0]{\theta}
\newcommand{\w}[0]{\omega}
\newcommand{\ra}[0]{\rightarrow}
\newcommand{\vectoro}{\overline}
\newcommand{\crairy}{\mathcal{CA}}
\newcommand{\wxy}{\mathcal{W}_{k;\bar{x},\bar{y}}}
\newtheorem{theorem}{Theorem}[section]
\newtheorem{partialtheorem}{Partial Theorem}[section]
\newtheorem{conj}[theorem]{Conjecture}
\newtheorem{lemma}[theorem]{Lemma}
\newtheorem{proposition}[theorem]{Proposition}
\newtheorem{corollary}[theorem]{Corollary}
\newtheorem{claim}[theorem]{Claim}
\newtheorem{experiment}[theorem]{Experimental Result}

\def\todo#1{\marginpar{\raggedright\footnotesize #1}}
\def\change#1{{\color{green}\todo{change}#1}}
\def\note#1{\textup{\textsf{\color{blue}(#1)}}}

\theoremstyle{definition}
\newtheorem{rem}[theorem]{Remark}

\theoremstyle{definition}
\newtheorem{com}[theorem]{Comment}

\theoremstyle{definition}
\newtheorem{definition}[theorem]{Definition}

\theoremstyle{definition}
\newtheorem{definitions}[theorem]{Definitions}

\theoremstyle{definition}
\newtheorem{conjecture}[theorem]{Conjecture}

\title{Universality for directed polymers in thin rectangles}

\author[A. Auffinger]{Antonio Auffinger}
\address{A. Auffinger,
University of Chicago,
5734 S. University Avenue, Chicago, IL 60637, USA}
\email{auffing@math.uchicago.edu}

\author[J. Baik]{Jinho Baik}
\address{J. Baik,
Department of Mathematics, University of Michigan,
 530 Church Street,
Ann Arbor, MI 48109, USA}
\email{baik@umich.edu}

\author[I. Corwin]{Ivan Corwin}
\address{I. Corwin,
Microsoft Research,
New England, 1 Memorial Drive, Cambridge, MA 02142, USA}
\email{ivan.corwin@gmail.com}

 \maketitle
\begin{abstract}
We consider the fluctuations of the free energy of positive temperature directed polymers
in thin rectangles $(N,N^{\alpha})$, $\alpha <3/14$. For general weight distributions with finite fourth moment we prove that the distribution of these fluctuations converges as $N$ goes to infinity to the GUE Tracy-Widom  distribution.
\end{abstract}

\section{Introduction}


We prove a statement of KPZ universality for directed polymers in random media in thin rectangles $(N,N^{\alpha})$ for $\alpha<3/14$. We assume \emph{general} weight distributions with at least a finite fourth moment. Our main result is the positive temperature analogue of Corollary 1.1 of \cite{JinhoToufic}, Theorem 1 of \cite{BM}, and the result of \cite{Suidan}, where a similar universality for last passage percolation was proved.

%
The model we consider is defined as follows.

\begin{definition}\label{def:1}
Let $W_{ij}$, $i,j \in \mathbb N$, be a family of i.i.d. random variables with $\E[W_{11}]=0$,  $\E[ (W_{11})^2] =1$ and $\E[(W_{11})^4] < \infty$. For each $j,k \in \mathbb N$ we define $S^j(k) = \sum_{i=1}^{k} W_{ij}$ with the convention $S^j(0)=0$. The partition function for the discrete directed random polymer from $(1,1)$ to $(N,n)$ at  inverse temperature $\beta>0$ is defined as
\begin{equation}\label{def:discretePolymer}
	Z_{N,n}(\beta) =
	\sum_{1=i_0\leq i_1 \leq \ldots \leq i_n=N}
	\exp \bigg(\beta \sum_{j=1}^n ( S^j(i_j)-S^j(i_{j-1}-1) ) \bigg).
\end{equation}

\end{definition}
\begin{rem} The hypotheses that $\E[W_{11}]=0$,  $\E[(W_{11})^2] =1$ are not restrictive. Indeed, a non-zero mean only changes $Z_{N,n}(\beta)$ by a (deterministic) multiplicative constant while a different second moment is just a rescaling of $\beta$. Also note that one can rewrite $Z_{N,n}(\beta)$ as the sum over all up/right lattice paths $\pi$ from $(1,1)$ to $(N,n)$ of the Boltzmann weights $\exp(\beta \sum_{(i,j)\in \pi} W_{ij})$.
\end{rem}

The main result of the paper is the following:
\begin{theorem}\label{mainthm}
For all $\alpha \in (0,\frac{3}{14})$ and $\beta>0$, if $n = \lfloor N^{\alpha}\rfloor$ then
\begin{equation*}
	\lim_{N\rightarrow \infty} \PP \bigg( \frac{\log Z_{N,n}(\beta) - 2\beta N^{\frac12+\frac{\alpha}2}}{\beta N^{\frac12-\frac{\alpha}6} } \leq r\bigg)  = F_{GUE}(r),
\end{equation*}
for each fixed $r\in \R$,
where $F_{GUE}$ is the GUE Tracy-Widom distribution function \cite{TW}.
\end{theorem}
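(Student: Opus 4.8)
The plan is to prove Theorem~\ref{mainthm} in two stages: a \emph{universality} step replacing the general i.i.d.\ weights by Gaussian ones, and an \emph{exact solvability} step for the resulting Gaussian model. The hypothesis $\alpha<3/14$ enters only in the first stage, through the size of a strong-approximation error; the Tracy--Widom law appears in the second stage, via the identification of the Gaussian polymer with a discretization of the O'Connell--Yor semi-discrete polymer.

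\emph{Stage 1: reduction to Gaussian weights.} Since $\E[W_{11}]=0$, $\E[W_{11}^2]=1$ and $\E[W_{11}^4]<\infty$, a Sakhanenko/Koml\'os--Major--Tusn\'ady strong approximation produces, for each row $j$, a coupling of $(S^j(k))_{k\le N}$ with a standard Brownian motion $B^j$ on a common space, the $B^j$ independent in $j$, such that $\E\bigl[\max_{1\le k\le N}\lvert S^j(k)-B^j(k)\rvert^{4}\bigr]\le CN$ with $C$ depending only on the law of $W_{11}$. Let $Z^{B}_{N,n}(\beta)$ be the partition function obtained from \eqref{def:discretePolymer} by replacing every $S^j$ with $B^j$, and put $D^j(k)=S^j(k)-B^j(k)$, $D^j(0)=0$. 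For an \emph{arbitrary} up/right path $1=i_0\le\cdots\le i_n=N$ the two exponents differ by $\beta\sum_{j=1}^{n}\bigl(D^j(i_j)-D^j(i_{j-1}-1)\bigr)$, which is $\le 2\beta\sum_{j=1}^{n}\max_{0\le k\le N}\lvert D^j(k)\rvert$ in absolute value, uniformly in the path; hence
\[
\bigl\lvert \log Z_{N,n}(\beta)-\log Z^{B}_{N,n}(\beta)\bigr\rvert \ \le\ 2\beta\sum_{j=1}^{n}\max_{0\le k\le N}\lvert D^j(k)\rvert .
\]
By Jensen's inequality the expectation of the right-hand side is at most $2\beta\,n\,(CN)^{1/4}=O(N^{\alpha+1/4})$ for $n=\lfloor N^{\alpha}\rfloor$, so by Markov's inequality the discrepancy is $O_{\PP}(N^{\alpha+1/4})$. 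Dividing by the fluctuation scale $\beta N^{1/2-\alpha/6}$ gives $O_{\PP}(N^{7\alpha/6-1/4})$, which vanishes in probability exactly when $\alpha<3/14$. So it suffices to prove the theorem for $Z^{B}_{N,n}(\beta)$.

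\emph{Stage 2: the Gaussian model.} Brownian scaling $B^j(Nx)\overset{d}{=}\sqrt{N}\,\widetilde B^j(x)$ and the time change $u=\beta^{2}Nx$ identify $Z^{B}_{N,n}(\beta)$, up to the explicit factor $\beta^{-2(n-1)}$ and up to replacing the $(n-1)$-fold lattice sum by the corresponding simplex integral, with the O'Connell--Yor partition function
\[
Z^{OY}_{t,n}\ =\ \int_{0<u_1<\cdots<u_{n-1}<t}\exp\!\Bigl(\sum_{j=1}^{n}\bigl(\widehat B^j(u_j)-\widehat B^j(u_{j-1})\bigr)\Bigr)\,du_1\cdots du_{n-1},\qquad t=\beta^{2}N,
\]
with $u_0=0$, $u_n=t$ and $\widehat B^1,\dots,\widehat B^n$ independent standard Brownian motions. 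Since the exponent is linear in the Brownian motions, the multiplicative distortion in passing from the sum to the integral is $e^{O_{\PP}(\sqrt{\log N})}$ per coordinate, so $\log Z^{B}_{N,n}(\beta)=\log Z^{OY}_{t,n}-2(n-1)\log\beta+O_{\PP}(N^{\alpha}\sqrt{\log N})$; these corrections are $o(N^{1/2-\alpha/6})$ for all $\alpha<3/7$. As $2\beta N^{1/2+\alpha/2}=2\sqrt{tn}$ and $\beta N^{1/2-\alpha/6}=t^{1/2}n^{-1/6}$ (the floor affecting these only at lower order), the theorem reduces to the Tracy--Widom asymptotics
\[
\frac{\log Z^{OY}_{t,n}-2\sqrt{tn}}{\,t^{1/2}n^{-1/6}\,}\ \Longrightarrow\ F_{GUE}\qquad\text{as }t\to\infty,\ \ n=\lfloor(t/\beta^{2})^{\alpha}\rfloor ,
\]
for the O'Connell--Yor free energy in the ``thin'' regime $n/t\to0$; I would obtain this from the Fredholm-determinant formula for the Laplace transform of $Z^{OY}_{t,n}$ coming from the Whittaker (Macdonald) process machinery, followed by a steepest-descent analysis with $n=o(t)$.

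\emph{Main obstacle.} The bottleneck is Stage~1. Reaching $\alpha<3/14$ requires (a) controlling the path error by the \emph{sum} $\sum_{j}\max_{k}\lvert D^j(k)\rvert$ rather than by $n\max_{j,k}\lvert D^j(k)\rvert$, since a union bound over the $n$ rows would cost a spurious factor $N^{\alpha/4}$; and (b) the sharp $L^{4}$ rate $\E[\max_{k\le N}\lvert D^j(k)\rvert^{4}]\le CN$, exactly what a finite fourth moment gives (with only $p<4$ moments the threshold would degrade to $\tfrac{6}{7}(\tfrac12-\tfrac1p)$). One cannot sidestep positive temperature via the crude sandwich $\beta L_{N,n}\le\log Z_{N,n}(\beta)\le\beta L_{N,n}+\log\binom{N+n-2}{n-1}$ (with $L_{N,n}$ the last-passage value): the entropy term is $o(N^{1/2-\alpha/6})$ only for $\alpha<3/7$, and in any case the general-weight last-passage statement has the identical fourth-moment bottleneck. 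A secondary point is to make the Stage~2 discretization quantitative enough to keep its error below $N^{1/2-\alpha/6}$, and to have the Fredholm-determinant asymptotics available in the thin regime $n=o(t)$ rather than only for $n\asymp t$.
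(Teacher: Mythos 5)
Your proposal reproduces the paper's two-stage architecture: your Stage~1 is the content of Lemma~\ref{firstmainlemma}, your Stage~2 lattice-to-simplex comparison is Lemma~\ref{lem:estimate}, and the Fredholm-determinant steepest descent in the thin regime is Proposition~\ref{asyprop}. The one genuine difference is the coupling tool in Stage~1: you invoke a Sakhanenko-type strong approximation, whereas the paper uses the Skorohod embedding theorem (producing i.i.d.\ stopping times whose finiteness of variance is exactly where $\E W^4<\infty$ enters), combined with Doob's maximal inequality and L\'evy's modulus of continuity, to obtain $\E\|\bar S^1_N-\bar B^1_N\|_\infty\le CN^{1/4+\delta}$. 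Both routes yield the same threshold $\alpha<3/14$; the Skorohod route is more elementary, and the paper reserves KMT for the stronger exponential-moment hypothesis in Remark~\ref{rem:KMT2}, where it improves the rate to $N^{\delta}$ and the threshold to $\alpha<3/7$. One technical caution about your stated bound $\E\bigl[\max_{k\le N}|D^j(k)|^4\bigr]\le CN$: the standard strong-approximation estimate under a finite $p$-th moment ($p>2$) is the tail bound $\PP\bigl(\max_{k\le N}|D^j(k)|>x\bigr)\le CNx^{-p}$, which yields the $L^1$ bound $\E\bigl[\max_{k}|D^j(k)|\bigr]=O(N^{1/p})$, but not the $L^4$ bound you assert (the integral $\int_{N^{1/4}}^{\infty}x^3\cdot Nx^{-4}\,dx$ diverges). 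Since you then apply Jensen only to extract the $L^1$ bound, this is a correctable misstatement rather than a gap. Your observation that one must control $\sum_{j}\max_k|D^j(k)|$ rather than $n\max_{j,k}|D^j(k)|$ (to avoid a spurious extra power of $n$) is exactly right and matches how the paper sums $\E\|\bar S^j_N-\bar B^j_N\|_\infty$ over rows in~\eqref{eq:eq9}.
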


\begin{rem}\label{rem:KMT}
If we assume that the $W_{ij}$ have all moments finite, the above result holds for all $\alpha<\frac37$.
See Remark \ref{rem:KMT2} below.
\end{rem}

%
%

The thinness assumption, $n=\lfloor N^{\alpha}\rfloor$ with $\alpha\in (0, \frac{3}{14})$, is only a technical condition. It is~anticipated~that a universality result should also hold for thicker rectangles (for all $0<\alpha\le 1$), after an appropriate modification of the centering and scaling of $\log Z_{N,n}(\beta)$.
The modification should depend on the distribution of weights; however, there is presently no prediction for the exact dependence for the full rectangle case when $n=N$\footnote{There is presently only one known {\it exactly solvable} positive temperature discrete polymer. It has weights which are distributed as the logarithm of inverse Gamma random variables. Introduced by Sepp\"{a}l\"{a}inen \cite{Sep}, this polymer has an explicit product invariant measure which allowed him to compute the law of large numbers for the free energy as well as a tight upper bound on the free energy fluctuation scaling exponent for this model. Due to the tropical RSK correspondence \cite{COSZ}, this polymer also fits into the hierarchy of solvable models studied in \cite{BorCor}. Though it has not yet been done, this should enable a rigorous proof of the GUE Tracy-Widom limit theorem for any size rectangle as long as both $n$ and $N$ go to infinity. Due to the work of \cite{OCon}, another model which fits into the hierarchy of \cite{BorCor} is the semi-discrete directed polymer model, introduced by O'Connell-Yor \cite{OY} (see discussion below Proposition \ref{asyprop}).},
For $\alpha<1$, at least the leading order term of the centering was already known. Based on the earlier works of \cite{GW,Sep2} and estimates similar to some of those in the proof of Proposition~\ref{compareprop}, Moreno Flores \cite[Theorem (8.8)]{Moreno} showed a law of large numbers for the free energy by proving that with probability one:
\begin{equation}\label{eq:LLN}
\lim_{N\rightarrow \infty} \frac{1}{N^{\frac{1}{2}+\frac{\alpha}{2}}}\log Z_{N,n}(\beta) = 2 \beta.
\end{equation}

We prove Theorem \ref{mainthm} by employing the Skorohod embedding theorem to couple the discrete directed polymers to the semi-discrete directed polymer of O'Connell-Yor\cite{OY}.
The asymptotics of the free energy of the semi-discrete polymer is then evaluated via a steepest descent analysis of the Fredholm determinant formula of \cite[Theorem 5.2.10]{BorCor}.

This coupling approach is an adaptation of the previous works on directed last passage percolation by \cite{JinhoToufic} and \cite{BM}, which corresponds to the case when $\beta=\infty$.
We note that the same thinness condition $\alpha<3/14$ was assumed in these works under the finite fourth moment condition. Let $L_{N,n} = \lim_{\beta\to\infty} \beta^{-1} \log Z_{N,n}(\beta)$ denote the last passage percolation time. Then it was shown that
\begin{equation}\label{eq:LPPuniv}
	\lim_{N\rightarrow \infty} \PP \bigg( \frac{L_{N,n} - 2 \sqrt{Nn}}{\sqrt{N}n^{-1/6} } \leq r\bigg)  = F_{GUE}(r).
\end{equation}
This is consistent with our main result if we formally take the limit $\beta\to \infty$ and replace $\beta^{-1} \log Z_{N,n}(\beta)$ by $L_{N,n}$.

When we assume that the first $p$ moments are finite for $p>2$, then it was shown in \cite{BM} that~\eqref{eq:LPPuniv} holds for $\alpha<\frac{6}{7}(\frac12- \frac1{p})$.
This is obtained by using the Koml\'{o}s-Major-Tusn\'{a}dy  theorem instead of the Skorohod embedding theorem in the couple argument.
Note that the upper bound is $\frac37$ when $p=\infty$, which is in agreement with the result in this paper stated in Remark~\ref{rem:KMT} above.
We remark that there is a very different proof of~\eqref{eq:LPPuniv} by \cite{Suidan} assuming that $p=3$. This proof is based on a concentration inequality of \cite{Chatterjee}.
The condition on $\alpha$ is same as $\alpha<\frac{6}{7}(\frac12- \frac1{p})$ with $p=3$.
It is an interesting question to find this approach can also be adapted to the polymer case.


\medskip

The semi-discrete directed random polymer of O'Connell-Yor \cite{OY} is defined as follows.

\begin{definition}
Fix $t>0$ and $n\in \N$.
To each ordered set $0=t_0 < t_1 < \cdots  <t_{n-1} < t_n=t$, we associate an up/right semi-discrete path $\phi=\{(x, i): t_{i-1}\le x\le t_i\} $  in $[0,t] \times \{1, \ldots, n\}$.
Given a family of independent standard one dimensional Brownian motions $B^i$, $1\leq i \leq n$, we define the energy of an up/right path $\phi$ as
$$ E(\phi) = B^1(t_1) + (B^2(t_2)-B^2(t_1)) + \cdots + (B^n(t_n)-B^n(t_{n-1})).$$
The  partition function for the semi-discrete directed random polymer at inverse temperature $\beta>0$  is
then defined as
\begin{equation*}\label{def:semidiscrete}
	\ZOY{n}{t}{\beta} = \int\limits_{0 =t_0< t_1 < \cdots  <t_{n-1} < t_n=t} \exp \big(\beta E(\phi)\big) \; d\phi \end{equation*}
where $d\phi$ denotes the Lebesgue measure in over the variables $t_1,\ldots, t_{n-1}$.
\end{definition}

\bigskip

Theorem \ref{mainthm} is a consequence of the following two results which we prove in Section \ref{sec:proofs}.
The first is a coupling result of $Z_{N,n}(\beta)$ and $\ZOY{n}{N}{\beta}$.
Here we do not assume anything on the relationship between $n$ and~$N$.

%

\begin{proposition}\label{compareprop}
Given $\delta>0$, there exist positive constants $C=C(\delta, \beta)$ and $c=c(\beta)$ such that
\begin{equation*}
	\PP \bigg( \big |\log Z_{N,n}(\beta) - \log \ZOY{n}{N}{\beta}  \big|>a \bigg)
	\leq \frac{C n N^{\frac14+\delta}}{a} + CNne^{-c(a-\log(n!))^2/n^2} 
\end{equation*}
for all $n,N\in \N$ and $a>0$.
\end{proposition}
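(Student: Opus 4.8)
The plan is to compare the discrete partition function $Z_{N,n}(\beta)$ with the semi-discrete one $\ZOY{n}{N}{\beta}$ by coupling the underlying random walks $S^j$ to the Brownian motions $B^j$ via the Skorohod embedding theorem, path by path, and then estimating the resulting discrepancy in the Boltzmann weights uniformly over all up/right paths. Concretely, by Skorohod embedding we may realize, on a common probability space, each walk $S^j(k) = \sum_{i=1}^k W_{ij}$ as $B^j(\tau^j_k)$ for stopping times $0 = \tau^j_0 \le \tau^j_1 \le \cdots$ with i.i.d. increments of mean $1$ (since $\E[W_{11}^2]=1$); the finite fourth moment of $W_{11}$ translates, via the Burkholder--Davis--Gundy and Skorohod moment bounds, into a finite second moment for the increments $\tau^j_k - \tau^j_{k-1}$. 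This gives quantitative control: $\E|\tau^j_k - k|$ and $\var(\tau^j_k)$ are of order $\sqrt{k}$ and $k$ respectively, and the modulus of continuity of $B^j$ can be invoked to bound $|S^j(k) - B^j(k)| = |B^j(\tau^j_k) - B^j(k)|$ in terms of $|\tau^j_k - k|$.

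Next, I would write both $\log Z_{N,n}(\beta)$ and $\log \ZOY{n}{N}{\beta}$ as a "soft max" (log of a sum/integral of exponentials) of a path energy, and use the elementary fact that $|\log(\sum e^{\beta a_\pi}) - \log(\sum e^{\beta b_\pi})| \le \beta \sup_\pi |a_\pi - b_\pi|$ whenever the two index sets can be matched appropriately — here one identifies a discrete path $1 = i_0 \le \cdots \le i_n = N$ with the semi-discrete path having jump times $t_j = i_j$ (up to the bookkeeping of the $i_{j-1}-1$ shift and the discrete-vs-Lebesgue normalization, which contributes the $\log(n!)$ term visible in the statement). The energy difference along a path is then a sum over $j=1,\dots,n$ of terms of the form $(S^j(i_j) - S^j(i_{j-1}-1)) - (B^j(t_j) - B^j(t_{j-1}))$, each controlled by $\sup_k |S^j(k) - B^j(k)|$ plus a Brownian oscillation term over an interval of length $O(1)$. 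Summing the $n$ coordinates and taking the supremum over the (polynomially many in $N$) relevant path endpoints, a union bound converts the tail estimate on a single $\sup_k |B^j(\tau^j_k) - B^j(k)|$ into the claimed bound: the first term $CnN^{1/4+\delta}/a$ comes from a Markov/Chebyshev estimate on $\sum_j \sup_{k\le N}|\tau^j_k - k|$, whose mean is $O(nN^{1/2})$ but which one refines to $O(nN^{1/4+\delta})$ after absorbing a Brownian modulus-of-continuity factor and optimizing $\delta$; the second, Gaussian, term $CNn\,e^{-c(a-\log n!)^2/n^2}$ comes from the Brownian oscillation estimate together with the $\log(n!)$ normalization shift and the union over endpoints.

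The main obstacle, and where the delicacy of the exponent $\tfrac14+\delta$ lies, is getting the right power of $N$ in the first error term: a naive bound on $|S^j(k)-B^j(k)|$ via $|\tau^j_k-k|$ would only give $O(N^{1/2})$ since $\tau^j_N$ fluctuates on scale $\sqrt N$, but because $B^j$ itself is only Hölder-$\tfrac12$, a displacement of $\tau^j_k$ by $h$ moves $B^j$ by only $O(\sqrt h \log^{1/2})$, so one must carefully interleave the Skorohod time-change estimate with the Brownian modulus of continuity and a maximal inequality for the martingale $\tau^j_k - k$ to extract the improved rate. A secondary technical point is handling the supremum over \emph{all} path endpoints simultaneously (not just the terminal point $N$), which requires either a chaining argument or the observation that the relevant quantity $\max_{k\le N}|S^j(k)-B^j(k)|$ already dominates every partial-path contribution, so a single maximal inequality per coordinate $j$ suffices and the union bound only costs the polynomial factor $Nn$. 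Once these estimates are in place, combining them with Chebyshev's inequality for the $L^1$-type error and a Gaussian tail bound for the oscillation error yields the stated inequality with constants depending only on $\delta$ and $\beta$.
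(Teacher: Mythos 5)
Your proposal follows essentially the same route as the paper's proof: Skorohod embedding (finite fourth moment of $W_{11}$ yielding finite second moment of the embedding times $\tau^j_k$), the observation that $\log$ of a sum of exponentials of path energies is $2\beta$-Lipschitz in the sup-norm distance between the underlying paths (the paper's Lemma on the map $F_N$), and splitting the error into a time-change piece and a Brownian-oscillation piece. The paper organizes this cleanly by introducing the intermediate object $F_N(\bar B_N)$ --- the discretized ``softmax'' applied to the Brownian paths at integer times --- and proving two separate lemmas: one comparing $\log Z_{N,n}(\beta)$ to $F_N(\bar B_N)$ via the Markov inequality on $\E\|\bar S_N^j-\bar B_N^j\|_\infty$ (giving the $N^{1/4+\delta}$ term), and one comparing $F_N(\bar B_N)$ to $\log\ZOY{n}{N}{\beta}$ via L\'evy's modulus of continuity and the Lebesgue measure of the cells $D(t_0,\dots,t_n)$ being pinched between $(n!)^{-1}$ and $1$ (giving the Gaussian term with the $\log(n!)$ shift). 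You describe the same two error sources in one pass, so the content matches.

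One small slip in your write-up: the Markov/Chebyshev estimate in the first term is applied to $\sum_j\E\|\bar S_N^j - \bar B_N^j\|_\infty$, i.e.\ to the sup-norm discrepancy of the interpolated \emph{path values}, not to $\sum_j\sup_{k\le N}|\tau^j_k-k|$ as you write; the latter has scale $\sqrt N$ and its mean is not ``refined'' to $N^{1/4+\delta}$. Rather, one uses Doob's inequality to confine $|\tau^j_k-k|$ to the scale $N^{1-\lambda}$ (paying a polynomial probability cost) and then L\'evy's modulus of continuity to convert that time offset into a path offset of scale $N^{(1-\lambda)/2}=N^{1/4}$ at $\lambda=1/2$; your ``main obstacle'' paragraph shows you understand exactly this composition, so this is an exposition slip rather than a gap. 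You also do not need a union bound over ``polynomially many path endpoints'' for the first term --- the Lipschitz property of $F_N$ with respect to the sup-norm metric automatically controls all paths at once, and the only union bound needed there is over the $n$ coordinates.
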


\begin{rem}\label{rem:KMT2}
If we assume that the $W_{ij}$ have all moments finite, then we can couple the discrete and semi-discrete polymers using the Koml\'{o}s-Major-Tusn\'{a}dy theorem and replace the term
$N^{\frac14+\delta}$ to $N^{\delta}$. For further detail, see Remark \ref{rem:KMTproof}.
It is easy to check that this improvement is enough to prove Theorem \ref{mainthm} for $\alpha<\frac37$.
\end{rem}

The second result is on the asymptotics for $\ZOY{n}{N}{\beta}$.

\begin{proposition}\label{asyprop}
For each fixed $\alpha\in (0,1)$ and $\beta>0$,
\begin{equation*}
	\lim_{t\to \infty} \PP\bigg(\frac{\log \ZOY{\lfloor t^{\alpha}\rfloor}{\beta^2 t}{1} - 2\beta t^{1-\kappa}}{\beta t^{\mu}}\leq r\bigg)= F_{\rm{GUE}}(r)
\end{equation*}
for every fixed $r\in\R$,
where $\kappa = \frac{1-\alpha}{2}$ and $\mu = \frac{3-\alpha}{6}$.
\end{proposition}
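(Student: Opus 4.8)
The plan is to analyze the Fredholm determinant formula for the Laplace transform of $\ZOY{n}{t}{\beta}$ coming from \cite[Theorem 5.2.10]{BorCor} by the method of steepest descent. First I would recall that by Brownian scaling $\ZOY{n}{\beta^2 t}{1}$ has the same law as (a deterministic factor times) $\ZOY{n}{t}{\beta}$ up to the usual $\beta$-rescaling of the partition function; more precisely the O'Connell--Yor partition function with parameter $\beta$ at time $t$ relates to the $\beta=1$ one at time $\beta^2 t$, so it suffices to prove the statement with $\beta=1$ for the variable $\log\ZOY{\lfloor t^\alpha\rfloor}{t}{1}$, centered by $2t^{1-\kappa}$ and scaled by $t^\mu$, and then undo the scaling. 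I would then invoke the formula expressing $\E[e^{-u\ZOY{n}{t}{1}/\Gamma\text{-factor}}]$ (with the appropriate gamma-function normalization appearing in \cite{BorCor}) as a Fredholm determinant $\det(I-K_u)$ on a suitable contour, where the kernel $K_u$ involves ratios of Gamma functions $\Gamma(v)/\Gamma(w)$ and the exponential weight $e^{t(\cdots)}$ together with the factor $u^{(w-v)/?}$ encoding the spectral variable.

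Next I would set up the steepest descent. Writing $n=\lfloor t^\alpha\rfloor$ and choosing the decay rate in the Laplace transform as $u = e^{-t^{1-\kappa}(2 + \text{something}) - r\, t^{\mu}}$ — i.e. centering $-\log u$ at the predicted free energy $2t^{1-\kappa}$ and putting the fluctuation variable $r$ at the scale $t^\mu$ — the integrand takes the form $e^{t\, G(w) - t\, G(v)}$ times lower-order corrections, where $G$ is an explicit function built from $\log\Gamma$ (via its integral representation or Stirling), a linear term from the $u$ factor, and the $\frac{n}{t}$-dependent term $\frac{n}{t}\log$. One locates the critical point $w_c$ of $G$; the key algebraic fact to check is that with the scaling $n/t \sim t^{\alpha-1}\to 0$, the critical point and the local behavior of $G$ near it produce exactly the Airy-kernel scaling: $G''(w_c)$ should vanish to the relevant order so that the cubic term $G'''(w_c)(w-w_c)^3$ dominates, and the natural local variable is $(w-w_c)\sim t^{-\mu}$ — this is precisely where the exponents $\kappa=\frac{1-\alpha}{2}$ and $\mu=\frac{3-\alpha}{6}$ come from. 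One then rescales $w = w_c + \sigma t^{-\mu} \tilde w$, $v = w_c + \sigma t^{-\mu}\tilde v$ for an appropriate constant $\sigma$, and shows the kernel converges pointwise to the Airy kernel $K_{\mathrm{Ai}}(\tilde w,\tilde v)$ (in its contour-integral form), with the $r$-shift entering as $K_{\mathrm{Ai}}(\tilde w + r', \tilde v + r')$.

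After the pointwise kernel convergence I would establish the needed uniform tail bounds: one deforms the $w$- and $v$-contours to steepest descent paths so that $\mathrm{Re}(G(w)-G(v))$ is bounded above by $-c\,|\tilde w - \tilde v|^?$ away from the critical point, giving a dominating function that is integrable and trace-class-summable, so that dominated convergence upgrades pointwise kernel convergence to convergence of the Fredholm determinant $\det(I-K_u)\to \det(I-K_{\mathrm{Ai}})_{L^2(r',\infty)} = F_{\mathrm{GUE}}(-r')$ up to the affine change of variable matching $r$. Finally, since $\det(I-K_u) = \E[e^{-u\ZOY{n}{t}{1}}]$ (modulo the Gamma normalization) and $u$ is chosen so that $u\ZOY{n}{t}{1} = e^{-t^\mu(\, r - (\log\ZOY{n}{t}{1} - 2t^{1-\kappa})/t^\mu\,)}$ roughly, one uses the standard argument that convergence of such exponential-of-exponential Laplace transforms at all $r$, together with the fact that the limit $r\mapsto F_{\mathrm{GUE}}$ is a continuous distribution function, implies convergence in distribution of $(\log\ZOY{n}{t}{1} - 2t^{1-\kappa})/t^\mu$; then undoing the $\beta$-scaling gives the stated proposition.

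The main obstacle I expect is the steepest descent analysis in the regime where the parameter ratio $n/t\to 0$: one must track how the critical point degenerates (the would-be double critical point of the finite-ratio O'Connell--Yor model becomes, in this thin limit, a point where $G''$ is small of a precise order), verify that the $\log\Gamma$ terms expanded via Stirling genuinely produce the cubic Airy behavior with the claimed exponents $\kappa,\mu$ rather than some other scaling, and — most delicately — choose and control the global contours so that the tail estimates hold uniformly in $t$ and in the spectral/fluctuation variable, since the Gamma function has poles and the contours must thread between them while remaining steepest-descent admissible. Getting the constants and the precise form of the centering $2t^{1-\kappa}$ right (including any $n$-dependent subleading corrections that might a priori appear at order $t^\mu$ and must be shown to cancel or be negligible) is the technical heart of the argument.
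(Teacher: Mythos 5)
Your high-level architecture matches the paper's: invoke the Fredholm determinant formula for the Laplace transform of $\ZOY{n}{\tau}{1}$ from \cite[Theorem 5.2.10]{BorCor}, choose $u = e^{-2\beta t^{1-\kappa} - rt^{\mu}}$ so that $\E[e^{-u\ZOY{n}{\tau}{1}}]$ becomes $\E[f^r_t(X_t)]$ for the rescaled free energy $X_t$, perform steepest descent on the kernel, and use the standard Lemma 4.1.38 of \cite{BorCor} to upgrade the Laplace transform limit to convergence in distribution. You also correctly flag that the key difficulty is how the critical point degenerates as $n/t\to 0$.

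The genuine gap is in the tool you propose for the Gamma-function asymptotics. You suggest expanding $\log\Gamma$ ``via its integral representation or Stirling''; Stirling gives the large-argument behavior, but in the thin regime the critical point collapses onto the pole of $\Gamma$ at the origin: $v_c\sim\beta^{-1}t^{-\kappa}\to 0$. What is actually needed, and what the paper uses, is the Laurent expansion near zero: $\Psi(z)=-1/z+O(1)$, $\Psi'(z)=1/z^2+O(1)$, $\Psi''(z)=-2/z^3+O(1)$. After the substitution $v=t^{-\kappa}\tilde v$, $s=t^{-\kappa}(\tilde\zeta-\tilde v)$, the exponent is $t^{\alpha}\big(G(\tilde v)-G(\tilde\zeta)\big)+rt^{\alpha/3}(\tilde v-\tilde\zeta)$ with $G(z)=\log\Gamma(t^{-\kappa}z)-\beta^2 z^2/2+2\beta z$, and it is precisely the singular terms $-1/z$, $1/z^2$, $-2/z^3$ evaluated at $\tilde v_c=\beta^{-1}$ that yield $G'(\tilde v_c)=O(t^{-\kappa})$, $G''(\tilde v_c)=O(t^{-2\kappa})$, $G'''(\tilde v_c)=-2\beta^3+O(t^{-3\kappa})$, i.e.\ a double critical point with leading cubic behavior. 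The $t^{-\kappa}$ scale is forced by balancing the $n\log\Gamma$ factor against the $\tau$-quadratic term, and the further local rescaling $\tilde v=\tilde v_c + t^{-\alpha/3}\hat v$ gives $v-v_c\sim t^{-\kappa-\alpha/3}=t^{-\mu}$, which is where your (correct) exponents come from. Stirling would never see this mechanism; the source of $\kappa$ and $\mu$ is the collapse of the saddle onto the pole, not a large-argument expansion. Once that is corrected, the remaining contour deformations and tail estimates are, as you anticipate, routine; the paper itself simply defers them to the proof of \cite[Theorem 4.1.46]{BorCor}.
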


Note that for $\alpha\in (0,1)$ we have that $\kappa \in (0,1/2)$ and $\mu\in (1/3,1/2)$.
For $\alpha=1$ an analog to the above result with $\mu=1/3$ is proved in \cite[Theorem 5.2.12]{BorCor} and \cite[Theorem 2.1]{BorCorFer}.
In this case the form of the law of the large numbers term which must be subtracted is
more involved.
Our proof of Proposition~\ref{asyprop} is an adaptation of the analysis of \cite{BorCor,BorCorFer}.
On the other hand,
for $\alpha=0$, if the number of rows is kept fixed at $n$ then the semi-discrete polymer free energy converges (after $\mu=1/2$ scaling) to a semi-discrete last passage percolation time.
It is known that
this last passage time is exactly distributed as the largest eigenvalue of an $n\times n$ GUE random matrix
(see \cite{Bary, GTW}; see also Theorem 1.1 of \cite{OCon}).
Thus, when $\alpha=0$, if we take $t$ to infinity first and then let $n$ tend to infinity,
we also obtain the $F_{\rm{GUE}}$ distribution as the limiting fluctuation.

We can now prove Theorem \ref{mainthm}.

\begin{proof}[Proof of Theorem \ref{mainthm}]
We apply Proposition~\ref{asyprop} with $t=N$ and note that Brownian scaling implies that $\ZOY{n}{t}{\beta}$ is equal in law to $\beta^{-2(n-1)}\ZOY{n}{\beta^2 t}{1}$. This reduces the proof of Theorem \ref{mainthm} to the claim that
$$\frac{\log Z_{N,n}(\beta) - \log \ZOY{n}{N}{\beta} - 2(N^{\alpha}-1)\log\beta}
{N^{\frac12-\frac{\alpha}6}} \;\quad \text {converges to zero in probability.}$$
We may neglect the term $2(N^{\alpha}-1)\log\beta$ due to the fact that,
\begin{equation}\label{alphacomp}
N^{\alpha}\ll N^{\frac12-\frac{\alpha}6}\quad \textrm{for all } \alpha<3/7
\end{equation}
as $N$ goes to infinity.

For any fixed $\epsilon>0$, we apply Proposition~\ref{compareprop} with $a=\epsilon N^{\frac12-\frac{\alpha}6}$
and $n=\lfloor N^{\alpha}\rfloor$ to see that for all $\delta>0$ there exist positive constants $c$ and $C$ such that
$$\PP \bigg( \left|\frac{\log Z_{N,n}(\beta)  - \log \ZOY{n}{N}{\beta}}{N^{\frac12- \frac{\alpha}{6}}}\right| > \e \bigg) \leq \frac{C}{\epsilon} N^{\frac76\alpha-\frac14+\delta}+ CN^{1+\alpha}e^{-\eta(N)}. $$
where $\eta(N) = cN^{-2\alpha} (a-\log(N^{\alpha}!))^2$. By Stirling's approximation, $\log(N^\alpha !) \approx N^{\alpha}\log N^{\alpha} - N^{\alpha}$. Using (\ref{alphacomp}) again, we find that $a\gg \log(N^{\alpha}!)$ for $N$ large, and hence $\eta(N) \geq c' N^{1-\frac{7}{3} \alpha}$ for some other constant $c'$ depending on $c$ and $\epsilon$.

As $N$ goes to infinity, the second term converges to zero if $\alpha<\frac37$ and the first term converges to zero if $\alpha< \frac{3}{14}$ by choosing $\delta$ to be small enough (but fixed in $N$). This completes the proof of Theorem \ref{mainthm}.
\end{proof}

\bigskip


\subsubsection*{Acknowledgments}
We would like to thank Moreno Flores for bring his work \cite{Moreno} to our attention. The work of Jinho Baik was supported in part by NSF grants DMS1068646. The work of Ivan Corwin was funded by Microsoft Research through the Schramm Memorial Fellowship.

\section{Proofs}\label{sec:proofs}

We prove Proposition \ref{compareprop} and Proposition \ref{asyprop}.

\subsection{Proof of Proposition \ref{compareprop}}
We compare the discrete polymer of general weight distribution (satisfying the conditions of Definition \ref{def:1}) with the semi-discrete polymer by coupling them using the Skorohod embedding theorem.

For much of the proof it will be convenient to rescale our processes so as to be functions in $C([0,1], \mathbb R^n)$, the space of continuous functions from $[0,1]$ to $\mathbb R^n$. We will denote such rescaled processes with an overbar to distinguish them from the unscaled versions. For two such functions $f=(f^1,\ldots,f^n)$ and  $g=(g^1, \ldots, g^n)$ in $C([0,1],\mathbb R^n)$ define the metric
\begin{equation}\label{metric}
d(f,g)=\sum_{j=1}^{n} ||f^j - g^j||_{\infty},
\end{equation}
where
\begin{equation}
||f^j - g^j||_{\infty} = \sup_{t \in [0,1]} |f^j(t) - g^j(t)|.
\end{equation}
Set
\begin{equation}\label{def:F}
	F_N(f)= \log \bigg[  \sum_{\substack{\frac{1}{N}=t_0 \leq t_1 \leq \cdots \leq t_{n-1}\leq t_n = 1 \\ t_i \in \frac1{N}\Z}}
	\exp \bigg( \beta \sum_{j=1}^{n} (f^j(t_j)-f^j(t_{j-1}-\tfrac{1}{N}))\bigg) \bigg]
\end{equation}
for  $N\in \mathbb N$. A basic property of $F_N$ is the following:

\begin{lemma}\label{lem:Lip}
The function $F_N:C([0,1],\mathbb R^n) \rightarrow \R$ defined in \eqref{def:F} is Lipchitz continuous with Lipchitz constant less than or equal to $2\beta$ with respect to the metric~\eqref{metric}.
\end{lemma}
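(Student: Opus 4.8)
The plan is to recognize $F_N$ as the composition of the ``log--sum--exp'' (soft maximum) function with the affine map that sends $f$ to its vector of path energies, and to exploit the elementary fact that log--sum--exp is $1$--Lipschitz with respect to the supremum norm on its arguments.

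First I would set up the finite index set. The sum in \eqref{def:F} runs over all tuples $\pi=(t_1,\dots,t_{n-1})$ with $t_i\in\tfrac1N\Z$ and $\tfrac1N=t_0\le t_1\le\cdots\le t_{n-1}\le t_n=1$; call this set $\Pi$. It is finite and nonempty for every $N$, so $F_N$ is well defined and real valued. For $\pi\in\Pi$ and $f=(f^1,\dots,f^n)\in C([0,1],\R^n)$ put
\[
E_f(\pi)=\sum_{j=1}^n\Big(f^j(t_j)-f^j\big(t_{j-1}-\tfrac1N\big)\Big),
\]
noting that $t_0-\tfrac1N=0$ and, since $t_{j-1}\le t_{n-1}\le 1$ and $t_{j-1}\ge t_0=\tfrac1N$, also $t_{j-1}-\tfrac1N\in[0,1]$ for every $j$, so that all evaluation points lie in $[0,1]$. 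Then $F_N(f)=\log\sum_{\pi\in\Pi}e^{\beta E_f(\pi)}$.

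Next I would invoke the inequality that for any two finite families of reals $(a_\pi)_{\pi\in\Pi}$ and $(b_\pi)_{\pi\in\Pi}$,
\[
\Big|\log\sum_{\pi\in\Pi} e^{a_\pi}-\log\sum_{\pi\in\Pi} e^{b_\pi}\Big|\le\max_{\pi\in\Pi}|a_\pi-b_\pi|,
\]
which follows from $\sum_\pi e^{a_\pi}\le e^{\max_\pi|a_\pi-b_\pi|}\sum_\pi e^{b_\pi}$, taking logarithms, and symmetry in $a,b$. Applying it with $a_\pi=\beta E_f(\pi)$ and $b_\pi=\beta E_g(\pi)$ gives $|F_N(f)-F_N(g)|\le\beta\max_{\pi\in\Pi}|E_f(\pi)-E_g(\pi)|$. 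To finish, I would set $h^j=f^j-g^j$, so that for each $\pi\in\Pi$,
\[
E_f(\pi)-E_g(\pi)=\sum_{j=1}^n\Big(h^j(t_j)-h^j\big(t_{j-1}-\tfrac1N\big)\Big),
\]
and since both arguments lie in $[0,1]$ we have $\big|h^j(t_j)-h^j(t_{j-1}-\tfrac1N)\big|\le 2\|h^j\|_\infty$. Summing over $j$ yields $|E_f(\pi)-E_g(\pi)|\le 2\sum_{j=1}^n\|f^j-g^j\|_\infty=2\,d(f,g)$, and hence $|F_N(f)-F_N(g)|\le 2\beta\,d(f,g)$.

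There is no serious obstacle here; the argument is routine soft-max convexity. The only two points worth care are that $\Pi$ is finite, so that log--sum--exp is genuinely $1$--Lipschitz and not merely subadditive, and that the factor $2$ in $|h^j(t_j)-h^j(t_{j-1}-\tfrac1N)|\le 2\|h^j\|_\infty$ is intrinsic: $t_j$ and $t_{j-1}-\tfrac1N$ are in general distinct points of $[0,1]$, so one cannot exploit any cancellation between the two evaluations of $h^j$, and the Lipschitz constant $2\beta$ cannot be improved by this argument.
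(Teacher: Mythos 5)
Your proposal is correct and is essentially the same argument as the paper's, just unpacked: where the paper compresses everything into the one line ``triangle inequality implies $A(f)\le A(g)e^{2\beta d(f,g)}$'' (with $A(f)=e^{F_N(f)}$) and then takes logarithms, you make explicit the two ingredients behind that line --- the $1$-Lipschitz property of log--sum--exp over a finite index set, and the termwise bound $|E_f(\pi)-E_g(\pi)|\le 2\,d(f,g)$ --- and you verify that the evaluation points stay in $[0,1]$. The extra detail is harmless and arguably clarifying, but there is no difference of method.
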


\begin{proof}
Set $A(f)= e^{F_N(f)}$.
Triangle inequality implies that $A(f) \leq A(g)e^{2\beta d(f,g)}$. 
Therefore, assuming without loss of generality that $\frac{A(f)}{A(g)} \geq 1$, we have
$|F(f)-F(g)| = \left|\log \frac{A(f)}{A(g)}\right|  \leq 2\beta d(f,g)$.
\end{proof}

The next lemma gives us a coupling between the random walks $S^j(k)$ in Definition~\ref{def:1} and the Brownian motions $B^j(t)$.
This is the Skorohod embedding theorem (whose proof can essentially be found in \cite[Theorem 8.6.1]{Durrett}) applied to each walk $S^{j}(\cdot)$.

\begin{lemma}\label{Skorohod}
For each $j$, there exist i.i.d. stopping times $\tau^{j}_k$, $k=1,2,\ldots$, (which are measurable with respect to the Brownian filtration of $B^j$) such that $\E[\tau^{j}_k] = 1$, $\E[(\tau^{j}_k)^2]~<~\infty$,  and
\begin{equation*}
	B^j(\tau_1^j+\cdots + \tau_k^j) \stackrel{d}{=}  S^j(k), \qquad k=1,2,\ldots.
\end{equation*}
\end{lemma}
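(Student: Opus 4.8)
The plan is to obtain the stopping times by applying the classical single-step Skorohod embedding to each walk separately and iterating via the strong Markov property, while keeping track of the second moment of the stopping time.

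\emph{Single step.} Since $\E[W_{11}]=0$ and $\E[W_{11}^2]=1<\infty$, the Skorohod embedding theorem \cite[Theorem 8.6.1]{Durrett} (or one of its standard variants producing a stopping time adapted to the Brownian filtration) supplies, for a standard Brownian motion $B$, a stopping time $T$ with $B(T)\stackrel{d}{=}W_{11}$ and $\E[T]=\E[W_{11}^2]=1$. The only nonstandard ingredient is the control of the second moment of $T$: from the explicit description of the Skorohod stopping time together with a Burkholder--Davis--Gundy / optional-stopping estimate, one gets $\E[T^2]\le C\,\E[W_{11}^4]$ for a universal constant $C$, so $\E[T^2]<\infty$ by the hypothesis $\E[W_{11}^4]<\infty$ in Definition \ref{def:1}. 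This is the only place where the finite fourth moment assumption is used.

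\emph{Iteration.} Fix $j$ and let $\tau^j_1$ be the stopping time above applied to $B^j$. Having defined $\tau^j_1,\ldots,\tau^j_{m-1}$, set $\sigma_{m-1}=\tau^j_1+\cdots+\tau^j_{m-1}$ and consider $\tilde B(s)=B^j(\sigma_{m-1}+s)-B^j(\sigma_{m-1})$; by the strong Markov property $\tilde B$ is a standard Brownian motion independent of $\mathcal{F}_{\sigma_{m-1}}$, and we let $\tau^j_m$ be the stopping time of the single-step construction applied to $\tilde B$. Then $\{\tau^j_k\}_{k\ge1}$ are i.i.d., each measurable with respect to the filtration of $B^j$, with $\E[\tau^j_k]=1$ and $\E[(\tau^j_k)^2]<\infty$; moreover the increments $B^j(\sigma_m)-B^j(\sigma_{m-1})=\tilde B(\tau^j_m)$ are i.i.d.\ with the law of $W_{11}$. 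Summing these increments shows that $k\mapsto B^j(\tau^j_1+\cdots+\tau^j_k)$ is a random walk with i.i.d.\ steps distributed as $W_{11}$, so it agrees in law with $k\mapsto S^j(k)$ (as a process, and in particular for each fixed $k$). Running this construction on the given independent family $\{B^j\}$ produces the asserted stopping times, independent across $j$.

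\emph{Main obstacle.} There is no genuine obstacle here, since the statement repackages a classical theorem. The one point requiring attention is the estimate $\E[(\tau^j_k)^2]<\infty$: this is not part of the bare embedding statement and must be extracted from the construction of the Skorohod stopping time, and it is precisely where the finite fourth moment hypothesis enters.
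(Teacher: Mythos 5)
Your proposal is correct and is essentially the same as the paper's, which simply cites the Skorohod embedding theorem (Durrett, Theorem 8.6.1) applied to each walk; your single-step-plus-strong-Markov iteration is the standard content behind that citation. You are also right to flag that $\E[(\tau^j_k)^2]\le C\,\E[W_{11}^4]<\infty$ is the one point going beyond the bare embedding statement and is exactly where the fourth-moment hypothesis is used.
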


Define
\begin{equation}\label{def:interpolation}
         \bar S_N^j(t) = B^j(\tau_1^j+\cdots + \tau_{\lfloor Nt \rfloor}^j)
         + \big(Nt -\lfloor Nt \rfloor \big)
         \big(B^j(\tau_1^j+\cdots + \tau_{\lceil Nt \rceil}^j) - B^j(\tau_1^j+\cdots + \tau_{\lfloor Nt \rfloor}^j)\big)
\end{equation}
to be the piecewise linear interpolation of the Brownian motion sampled at times $\tau^1_1+\cdots+\tau^j_k$ for $1\leq k \leq N$, rescaled so as to be a function in $C([0,1],\mathbb R)$.

Consider the continuous vector valued function $\bar S_N(t) := (\bar{S}_N^1(t), \ldots, \bar{S}_N^n(t))\in C([0,1],\mathbb R^n)$.
From Lemma~\ref{Skorohod}, $\bar S_N^j(\frac{k}{N}) = S_N^j(k)$ for $1~\le~k~\le~N$. Therefore, from \eqref{def:discretePolymer}, \eqref{def:F}, and \eqref{def:interpolation}, one finds that
\begin{equation*}\label{eq:distributioneq}
	\log Z_{N,n}(\beta) \stackrel{d}{=} F_N\big(\bar S_N\big).
\end{equation*}

We also define the rescaling of the Brownian motions by
$$\bar B_N(t) = (B^1_N(Nt), \ldots, B_N^n(Nt)).$$ 
The proof of Proposition~\ref{compareprop} is now obtained by estimating probabilitistic bounds on
$F_N\big(\bar S_N\big)-F_N\big(\bar B_N\big)$ and
$F_N\big(\bar B_N\big)-\log \ZOY{n}{N}{\beta} $.
The relevant estimates are obtained in Lemma~\ref{firstmainlemma} and in Lemma~\ref{lem:estimate} respectively.

\begin{lemma}\label{firstmainlemma}
For any given $\delta>0$,  there exist a positive constant $C$ such that
\begin{equation*}\label{eq:bound}
	\PP\bigg(\left|F_N(\bar{S}_N)-F_N(\bar{B}_N)\right| \geq a \bigg)  \leq \frac{C\beta n}{a}N^{\frac14+\delta}
\end{equation*}
for all $n, N\in \mathbb N$ and  $a > 0$.
\end{lemma}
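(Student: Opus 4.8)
\textbf{Proof plan for Lemma \ref{firstmainlemma}.}

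The plan is to exploit the Lipschitz bound of Lemma \ref{lem:Lip}, which reduces everything to controlling $d(\bar S_N, \bar B_N) = \sum_{j=1}^n \|\bar S_N^j - \bar B_N^j\|_\infty$. By Lemma \ref{lem:Lip}, $|F_N(\bar S_N) - F_N(\bar B_N)| \le 2\beta\, d(\bar S_N,\bar B_N)$, so it suffices to show $\PP(d(\bar S_N,\bar B_N) \ge a/(2\beta)) \le \frac{Cn}{a}N^{1/4+\delta}$. A union bound over $j$ and Markov's inequality then reduce the task to a single-index estimate: it is enough to prove $\E\big[\|\bar S_N^j - \bar B_N^j\|_\infty\big] \le C' N^{-3/4+\delta}$ for each $j$, with $C'$ uniform in $j$; summing over the $n$ indices and dividing by $a/(2\beta n \cdot n^{-1})$... more precisely, $\PP(d \ge a/(2\beta)) \le \frac{2\beta}{a}\E[d] = \frac{2\beta}{a}\sum_j \E[\|\bar S_N^j-\bar B_N^j\|_\infty] \le \frac{2\beta n}{a} C' N^{-3/4+\delta}$, which is exactly the claimed bound after absorbing constants.

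The heart of the matter is therefore the sup-norm estimate on a single coordinate. Recall $\bar S_N^j$ is the rescaled piecewise-linear interpolation of $B^j$ sampled at the Skorohod times $T_k^j := \tau_1^j + \cdots + \tau_k^j$, while $\bar B_N^j(t) = B^j(Nt)$. Writing the scaled process as $B^j(Nt)/\sqrt{N}$ after the natural diffusive normalization (or keeping the present normalization and tracking factors), the difference $\|\bar S_N^j - \bar B_N^j\|_\infty$ is governed by two sources of error: (i) the discrepancy $\max_{1\le k\le N}|T_k^j - k|$ between the Skorohod clock and the deterministic clock, which is an i.i.d. sum $T_k^j - k = \sum_{i=1}^k(\tau_i^j - 1)$ of mean-zero, finite-variance (since $\E[(\tau_k^j)^2]<\infty$) increments, so $\max_{k\le N}|T_k^j-k|$ is of order $N^{1/2}$ up to logarithmic/polynomial corrections by Doob's maximal inequality; and (ii) the modulus of continuity of Brownian motion, which converts a time-displacement of order $N^{1/2}$ near argument $N$ into a spatial displacement of order $(N^{1/2}\log N)^{1/2} = N^{1/4}(\log N)^{1/2}$, plus the interpolation error which is of the same or smaller order. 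After dividing by the scaling factor $N^{1/2}$ that makes $\bar B_N^j$ an order-one process on $[0,1]$, this yields $\E[\|\bar S_N^j - \bar B_N^j\|_\infty] \lesssim N^{1/4-1/2+\delta} = N^{-1/4+\delta}$ — wait, one must recheck: the statement wants $N^{1/4+\delta}/a$ overall, i.e. $\E[\|\cdot\|_\infty]\lesssim N^{1/4+\delta}/n \cdot$(nothing), so with the factor of $n$ pulled out we need $\E[\|\bar S_N^j-\bar B_N^j\|_\infty]\lesssim N^{1/4+\delta}$; this matches the \emph{unscaled} difference $\max_{k\le N}|B^j(T_k^j) - B^j(k)|$, whose fluctuations are $\sqrt{|T_k^j - k|} \sim N^{1/4}$ as above. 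So one works directly with the unnormalized interpolations $\bar S_N^j, \bar B_N^j$ as defined in \eqref{def:interpolation} and below, and the target is $\E[\|\bar S_N^j - \bar B_N^j\|_\infty] \le C N^{1/4+\delta}$.

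Concretely, I would (1) bound $\|\bar S_N^j - \bar B_N^j\|_\infty \le \max_{0\le k\le N} |B^j(T_k^j) - B^j(k)| + \omega_{B^j}(\max_k|T_k^j - k| + 1; [0, N+O(N^{1/2})])$ where $\omega_{B^j}(\cdot;\cdot)$ is the modulus of continuity, to separate the clock error from the interpolation/continuity error; (2) apply the $L^2$-maximal inequality to the random walk $\sum(\tau_i^j - 1)$ to get $\E[\max_{k\le N}|T_k^j - k|^2]\le C N$, hence $\max_k|T_k^j-k| \le N^{1/2+\delta/2}$ off an event of probability $\le C N^{-\delta}$ (Chebyshev), and similarly control the maximum of $T_k^j$ itself; (3) on that good event, use Gaussian tail bounds / the standard estimate $\E[\sup_{|s-t|\le h, s,t\le L}|B(s)-B(t)|] \le C\sqrt{h\log(L/h)}$ to convert a time displacement of size $\le N^{1/2+\delta/2}$ into a spatial displacement of expected size $\le C N^{1/4+\delta}$, and handle the small bad event crudely (the contribution of $\|\bar S_N^j - \bar B_N^j\|_\infty \mathbf 1_{\text{bad}}$ is negligible by Cauchy--Schwarz using a polynomial a priori bound on $\E[\|\bar S_N^j-\bar B_N^j\|_\infty^2]$). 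The main obstacle, and where the $\delta$ losses and the finite-fourth-moment hypothesis enter, is step (2)--(3): one must quantitatively trade the fluctuation of the Skorohod clock against Brownian regularity while keeping all estimates uniform in $j$ and explicit in $N$, and the $N^\delta$ factor is precisely the slack needed to pass from a second-moment (Chebyshev) control of $\max_k|T_k^j-k|$ — all that $\E[(\tau_k^j)^2]<\infty$ buys us — to an almost-sure-type bound usable inside the modulus-of-continuity estimate. (If one assumed more moments on the $\tau_k^j$, equivalently on the $W_{ij}$, one could sharpen $N^{1/4+\delta}$ toward $N^{1/4}$ up to logs, which is the source of Remark \ref{rem:KMT2}.)
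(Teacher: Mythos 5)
Your reduction to the single-coordinate bound $\E\big[\|\bar S_N^1-\bar B_N^1\|_\infty\big]\le C' N^{1/4+\delta}$ via Lemma~\ref{lem:Lip}, Markov, and the triangle inequality is exactly what the paper does, and your two sources of error — the Skorohod clock discrepancy $\max_{k\le N}|T^j_k-k|$ (controlled by Doob's $L^2$ maximal inequality, which only needs $\E[(\tau^j_k)^2]<\infty$) and the modulus of continuity of Brownian motion (Lévy's estimate, stated as Lemma~\ref{lem:levy}) — are precisely the paper's two ingredients. So the plan is the right one at the level of ingredients.

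However, the way you combine them has a genuine gap. You propose a \emph{single} dichotomy: cut at $\max_k|T^j_k-k|\le N^{1/2+\gamma}$ (with $\gamma=\delta/2$ in your write-up), bound the good event by Lévy's modulus, and dispose of the bad event by Cauchy--Schwarz against an a priori second-moment bound on $\|\bar S_N^j-\bar B_N^j\|_\infty$. Chebyshev gives $\PP(\text{bad})\le C N^{-2\gamma}$, and the a priori second moment is of order $N$ (the process is dominated by $\sup|B^j|$ over a time window of length $\asymp N$), so Cauchy--Schwarz only yields $\E[\|\cdot\|_\infty\mathbf 1_{\text{bad}}]\lesssim N^{1/2-\gamma}$. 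Matching this against the target $N^{1/4+\delta}$ forces $\gamma\ge 1/4-\delta$, while the good-event modulus bound $N^{1/4+\gamma/2}$ forces $\gamma\le 2\delta$; these are compatible only when $\delta\ge 1/12$, so the argument as written does \emph{not} reach the full range $\delta>0$ claimed in the lemma. (Replacing Cauchy--Schwarz by a higher Hölder exponent and BDG improves, but never eliminates, the lower constraint on $\delta$, because you still pay $N^{1/2}$ in the a priori size of the bad-event term.)

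The paper avoids this by never fixing a single threshold. It writes $\E\|\bar S_N^1-\bar B_N^1\|_\infty = N^\rho\int_0^\infty \PP(\|\cdot\|_\infty>uN^\rho)\,du$, splits each tail probability according to a \emph{$u$-indexed} family of events $A_N(u,\lambda)=\{\max_k|T_k^1-k|>uN^{1-\lambda}\}$, and integrates in $u$. On the bad piece one simply integrates the Doob bound $\text{Var}(\tau_1^1)/(N^{1-2\lambda}u^2)$ over $u\ge 1$, yielding $\text{Var}(\tau_1^1)\,N^{\rho-1+2\lambda}$; on the good piece Lévy's modulus gives a super-polynomially small term. Choosing $\lambda=1/2$, $\rho=1/4+\delta$ then gives $\E\|\bar S_N^1-\bar B_N^1\|_\infty\le C N^{1/4+\delta}$ for \emph{every} $\delta>0$. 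The essential point you are missing is that the bad-event threshold must scale with the tail level $u$ (equivalently, one needs the layer-cake representation of the expectation), so that the $1/u^2$ decay of Doob's bound is integrable and no brute-force a priori second moment is needed on the bad event.
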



\begin{proof}
From Lemma \ref{lem:Lip}, the Markov inequality, and the union bound
\begin{equation}\label{eq:eq9}
\begin{split}
 	\PP\bigg(\left|F_N(\bar{S}_N)-F_N(\bar{B}_N)\right| \geq a \bigg) &\leq \PP\bigg(d(\bar S_N, \bar B_N)  \geq \frac{a }{2 \beta  } \bigg)
\leq \frac{2\beta n}{a} \E \| \bar{S}_N^1-\bar{B}^1_N\|_{\infty}.
\end{split}
\end{equation}

Define the event
\begin{equation*}
	A_N(u,\lambda) := \bigg\{ \sup_{1\leq i \leq N} \left|\sum_{\ell=1}^i \tau_\ell^1 -i\right|>u N^{1-\lambda}\bigg\}
\end{equation*}
for $u>0$ and $\lambda\in (0,1)$. 
From Doob's inequality (\cite[Theorem 5.4.2 and Example 5.4.1]{Durrett}) and the definition of $\tau_l^1$ we have
\begin{equation}\label{eq:doob}
\PP \big( A_N(u,\lambda) \big) \leq \frac{\text{Var} \; (\tau^1_1)}{N^{1-2\lambda}u^2}.
\end{equation}
We now fix $\rho>0$ and write
\begin{equation}\label{eq:eq1}
\begin{split}
 	\E \| \bar{S}_N^1-\bar{B}^1_N\|_{\infty}
	&= N^\rho \int_{0}^{\infty} \PP \bigg(\| \bar{S}_N^1-\bar{B}^1_N\|_{\infty} > uN^{\rho}\bigg) \; du \\
 &\le N^{\rho} + N^\rho \int_{1}^{\infty} \PP \bigg(\| \bar{S}_N^1-\bar{B}^1_N\|_{\infty} > u N^{\rho} \bigg) \; du.
\end{split}
\end{equation}
In order to estimate the last integral, we fix $\lambda\in (0,1)$ and
divide the sample space into $A_N(u,\lambda)$ and $A_N(u,\lambda)^c$.
From  \eqref{eq:doob} we find that
\begin{equation}\label{eq:eq2}
	N^\rho \int_{1}^{\infty}
	\PP \bigg(\|  \bar{S}_N^1-\bar{B}^1_N \|_{\infty} > uN^{\rho}, \; A_N(u,\lambda)\bigg) \; du
	\le \text{Var} \; (\tau^1_1) N^{\rho- 1+2\lambda} .
\end{equation}

It remains to estimate
\begin{equation*}\label{eq:eq1-1-1}
\begin{split}
	N^\rho \int_{1}^{\infty}\PP \bigg(\|  \bar{S}_N^1-\bar{B}^1_N \|_{\infty} > u N^{\rho}, \; A_N(u,\lambda)^c \bigg) \; d u.
\end{split}
\end{equation*}
For this purpose, we note the following result of P. Levy on the modulus of continuity of a Brownian motion.  (See the proof of  \cite[Theorem~ 8.4.2]{Durrett}.)

\begin{lemma}\label{lem:levy}
There are positive constants $K_1, K_2$ such that
\begin{equation}\label{scaledlevy}
	\PP \bigg( \sup_{\substack{s,t \in [0,1] \\ |t-s|\le r}} |B^1(s)-B^{1}(t)| > x  \bigg)
	\leq \frac{K_1}{r} e^{-K_2 \frac{x^2}{r}}
\end{equation}
for all $r\in (0,1)$ and $x>0$.
\end{lemma}

By \eqref{def:interpolation} and the triangle inequality,
\begin{equation}\label{eq:eq15}
\begin{split}
	&\PP \bigg( \|\bar S^1_N(t) - \bar B^1_N(t)\|_{\infty} > u N^{\rho}, \; A(u,\lambda)^c \bigg) \\
	& \leq   \PP \bigg( \sup_{t\in[0,1]} \left|B^1(\tau_1^1 + \cdots + \tau_{\lfloor Nt \rfloor}^1) - B^1(Nt)\right| > \frac12 uN^{\rho},
\;  A(u,\lambda)^c \bigg) \\
	&+ \PP \bigg( \sup_{t\in[0,1]} \left|B^1(\tau_1^1 + \cdots + \tau_{\lceil Nt \rceil}^1) - B^1(\tau_1^1 + \cdots + \tau_{\lfloor Nt \rfloor}^1) \right| > \frac12 uN^{\rho}, \;  A(u,\lambda)^c \bigg).
\end{split}
\end{equation}
On $A(\lambda,u)^c$ we have $|\tau_1^1 + \cdots + \tau_{\lfloor Nt \rfloor}^1 - \lfloor Nt \rfloor |\le  uN^{1-\lambda}$ for all $t\in [0,1]$.
Since $\lambda\in (0,1)$, we have $uN^{1-\lambda}\ge N^{1-\lambda}\ge 1$ for all $u\ge 1$ and $N\ge 1$.
Hence on $A(u,\lambda)^c$ we have for all $t \in [0,1]$ and  $N\geq 1$
\begin{equation*}
	|\tau_1^1 + \ldots + \tau_{\lfloor Nt \rfloor}^1 -Nt|\le  2uN^{1-\lambda}.
\end{equation*}

Thus, the first term in the right side of \eqref{eq:eq15} is bounded above by
\begin{equation}\label{eq:eq16}
\PP \bigg( \sup_{\substack{s,t \in [0,N+uN^{1-\lambda}] \\  |t-s|\le 2uN^{1-\lambda} }} |B^1(t) - B^1(s)| > \frac12 uN^{\rho} \bigg). 
\end{equation}

Similarly, the second term in the right side of \eqref{eq:eq15} is bounded above by
\begin{equation}\label{eq:eq16-1}
\PP \bigg( \sup_{\substack{s,t \in [0,N+uN^{1-\lambda}] \\  |t-s|\le 3uN^{1-\lambda} }} |B^1(t) - B^1(s)| > \frac12 uN^{\rho} \bigg).
\end{equation}
We must consider $s,t\in [0,N+uN^{1-\lambda}]$ because on the event $A(u,\lambda)^c$, $(\tau_1^1 + \ldots + \tau_{\lceil Nt \rceil})\leq~N+uN^{1-\lambda}$.

We wish to estimate the above probabilities using Lemma~\ref{lem:levy}. To do that we must rescale time by the factor $N+uN^{1-\lambda}$. Thus define $\tilde{t} = (N+uN^{1-\lambda})^{-1} t$. Then $B^{1}(t) = (N+uN^{1-\lambda})^{1/2} \tilde{B}^{1}(\tilde{t})$ where $\tilde{B}^{1}$ is another standard Brownian motion. This rescaling reduces \eqref{eq:eq16} to the left-hand side of \eqref{scaledlevy} with
$$ r= \frac{2 u N^{1-\lambda}}{N+uN^{1-\lambda}}, \qquad x=\frac{uN^{\rho}}{2 \sqrt{N+uN^{1-\lambda}}}.$$
Applying Lemma~\ref{lem:levy} shows that
$$\eqref{eq:eq16} \leq \frac{K_1}{2}\left(\frac{N^{\lambda}}{u} + 1\right) e^{-\frac{K_2}{8} u N^{2\rho - 1 +\lambda}}.$$
Likewise one sees that
$$\eqref{eq:eq16-1} \leq \frac{K_1}{3}\left(\frac{N^{\lambda}}{u} + 1\right) e^{-\frac{K_2}{12} u N^{2\rho - 1 +\lambda}}.$$

Integrating the sum of these bounds over the interval $u\in (1, \infty)$ and multiplying $N^{\rho}$, we find that for some other positive constants $K_3$ and $K_4$,
\begin{equation*}\label{eq:eq17}
	N^\rho \int_{1}^{\infty}\PP \bigg(\|  \bar{S}_N^1-\bar{B}^1_N \|_{\infty} > u N^{\rho}, \; A_N(u,\lambda)^c \bigg) \; d u
	\le K_3 N^{1-\rho} e^{-K_4 N^{2\rho-1+\lambda}}
\end{equation*}
for all $N\ge 1$.

Combining with \eqref{eq:eq1} and \eqref{eq:eq2},
we conclude that
\begin{equation}\label{eq:eq18}
 	\E \| \bar S_N^1-\bar{B}^1_N\|_{\infty} \leq
	N^\rho+  \text{Var} \; (\tau^1_1) N^{\rho- 1+2\lambda}
	+ K_3 N^{1-\rho} e^{-K_4 N^{2\rho-1+\lambda}}.
\end{equation}
Recall that $\text{Var} \; (\tau^1_1)<\infty$ by Lemma \ref{Skorohod}. Now choosing $\lambda= \frac12$ and $\rho=\frac14+\delta$ with $\delta>0$,  Lemma \ref{firstmainlemma} follows from  \eqref{eq:eq9} and \eqref{eq:eq18}.
\end{proof}

\begin{rem}\label{rem:KMTproof}
If we assume finite exponential moments for the weight distributions then we can better approximate the simple random walk by a Brownian motion using the dyadic approximation of Koml\'{o}s-Major-Tusn\'{a}dy. Indeed, using \cite[Theorem 7.1.1]{greg}, one sees that in this case for any $\delta >0$ there exists a constant $C>0$ such that
$$\E \| \bar S_N^1-\bar{B}^1_N\|_{\infty} \leq C N^\delta.$$
Then from (\ref{eq:eq9}) the term $N^{\frac{1}{4} + \delta}$ in Lemma \ref{firstmainlemma} is improved to $N^{\delta}$. This in turn implies the same change in the first term of the bound in Proposition \ref{compareprop}. It is easy to check that with this improved bound the proof of Theorem \ref{mainthm} goes through under the weaker assumption that $\alpha<\frac{3}{7}$.
\end{rem}

We now compare the partition function of the semi-discrete polymer with $F_N(\bar B_N )$.


\begin{lemma}\label{lem:estimate}
There are positive constants $K_1$ and $K_2$ such that
\begin{equation*}
	\PP \bigg( \left|F_N(\bar B_N) - \log \ZOY{n}{N}{\beta}\right| >a \bigg) \leq K_1nN e^{-\frac{K_2(a-\log (n!))^2}{4\beta^2n^2}}.
\end{equation*}
for all $n,N\in \N$ and $a>0$.
\end{lemma}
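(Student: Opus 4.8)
The plan is to compare $F_N(\bar B_N)$ — which is a Riemann-sum discretization of the semi-discrete partition function built from the \emph{same} Brownian motions $B^1,\dots,B^n$ — with $\log \ZOY{n}{N}{\beta}$ itself. The key point is that $F_N(\bar B_N)$ and $\ZOY{n}{N}{\beta}$ are, by construction, two functionals of one common family of Brownian paths, so the difference can be controlled pathwise on a large-probability event. First I would unwind the definitions: $e^{F_N(\bar B_N)}$ is a sum over lattice points $t_i \in \frac1N\Z$ of $\exp(\beta\sum_j (B^j(t_j) - B^j(t_{j-1} - \tfrac1N)))$ after rescaling time by $N$, while $\ZOY{n}{N}{\beta}$ is the integral over $0 = s_0 < s_1 < \dots < s_n = N$ of $\exp(\beta \sum_j (B^j(s_j) - B^j(s_{j-1})))$. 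So I want to compare a lattice sum with mesh $1/N$ (in rescaled time, i.e. mesh $1$ in the $\ZOY{}{}{}$ time variable) against the continuous integral.

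The main step is a two-sided sandwich. For the lower bound on $\ZOY{n}{N}{\beta}$ in terms of $e^{F_N(\bar B_N)}$: each lattice simplex cell of side $1$ contributes volume on the order of $1/n!$ (the volume of the simplex region near a lattice point within the big simplex), and on each such cell the energy $E(\phi)$ differs from its value at the corner by at most $2\beta$ times the modulus of continuity of the $B^j$ at scale $1$ over the interval $[0,N]$. Define the event that $\sup_j \sup_{|t-s|\le 1,\, s,t\in[0,N]} |B^j(t)-B^j(s)| \le M$; on this event one gets $\ZOY{n}{N}{\beta} \ge \frac{c}{n!} e^{F_N(\bar B_N) - 2\beta M}$ and, symmetrically, $\ZOY{n}{N}{\beta} \le C \, n \, e^{F_N(\bar B_N) + 2\beta M}$, where the extra factors of $n$ (or $n!$) and constants come from counting lattice cells versus integrating. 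Taking logs, $|F_N(\bar B_N) - \log \ZOY{n}{N}{\beta}| \le 2\beta M + \log(n!) + O(\log n)$, which is $\le a$ once $2\beta M \le a - \log(n!)$, i.e. $M \le \frac{a-\log(n!)}{2\beta}$ (absorbing the lower-order $\log n$ into constants). Hence the failure probability is bounded by $\PP(M > \frac{a - \log(n!)}{2\beta})$.

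To finish, I would estimate that last probability. By a union bound over the $n$ motions, $\PP(M > x) \le n \,\PP(\sup_{|t-s|\le 1,\, s,t\in[0,N]} |B^1(t)-B^1(s)| > x)$, and then cover $[0,N]$ by $O(N)$ overlapping unit intervals and apply the scaled Lévy modulus-of-continuity estimate of Lemma~\ref{lem:levy} (rescaling a unit-length window of Brownian motion) to each, giving a bound of the form $K_1 N n\, e^{-K_2 x^2}$. Setting $x = \frac{a-\log(n!)}{2\beta}$ produces exactly $K_1 nN\, e^{-K_2 (a-\log(n!))^2/(4\beta^2 n^2)}$ — note the $n^2$ in the denominator is exactly what one needs if, more carefully, the pathwise bound on $|F_N(\bar B_N) - \log\ZOY{n}{N}{\beta}|$ involves $2\beta n M'$ where $M'$ is a \emph{per-step} increment bound rather than a global one, i.e. one should bound each of the $n$ summands $|B^j(t_j) - B^j(s_j)|$ separately, which is the honest way the factor $n$ enters and explains the $n^2$. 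The step I expect to be the main obstacle is getting the combinatorial/volume bookkeeping exactly right — matching the lattice sum (with its $\sum_{t_i \in \frac1N\Z}$) to the simplex integral so that the discrepancy is genuinely controlled by $\log(n!)$ plus $\beta n$ times a modulus of continuity, with no hidden dependence on $N$ beyond the harmless $\log N$-free prefactor, and making sure the comparison works uniformly at the boundary cells of the simplex where fewer lattice points are available.
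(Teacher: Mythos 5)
Your approach matches the paper's: partition the semi-discrete simplex into unit-scale lattice cells $D(t_0,\ldots,t_n)$, sandwich each cell's integral by the corresponding $F_N(\bar B_N)$ summand up to a factor $e^{\pm 2\beta n u}$ on the event $S(u)^c$ that unit-scale increments of all $n$ Brownian motions stay below $u$, and bound $\PP(S(u))$ via Lemma~\ref{lem:levy}, Brownian scaling, and a union bound over $n$ motions and $O(N)$ unit windows. The self-correction at the end of your write-up is exactly the right repair — the energy difference is a sum of $2n$ increments at scale $\le 1$, so the pathwise bound is $2\beta n u$ (not $2\beta u$), and then $u=(a-\log(n!))/(2\beta n)$ produces the $n^2$ in the exponent; the paper also observes that the Lebesgue measure of each $D(t_0,\ldots,t_n)$ lies in $[(n!)^{-1},1]$, which gives the clean sandwich $F_N(\bar B_N)-2\beta n u-\log(n!)\le \log\ZOY{n}{N}{\beta}\le F_N(\bar B_N)+2\beta n u$ with no extra $Cn$ factor.
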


\begin{proof} The proof follows the same strategy of Lemma \ref{firstmainlemma}. We will use Lemma \ref{lem:levy}.
For $u >0$,  we define the event
$$ S(u) = \bigg \{  \sup_{\substack{s,t \in [0,N] \\ 0\leq |t'-s'|<1 \\ 1 \leq j \leq n }}
	|B^j(t) - B^j(s)| > u \bigg\}.$$
By Lemma \ref{lem:levy}, Brownian scaling, and the union bound, 
\begin{equation}\label{eq:bogus}
	\PP(S(u)) \leq K_1nN e^{-K_2 u^2}.
\end{equation}

%

Let us now observe that we can write
\begin{equation}\label{ineq:32}
\begin{split}
 	\ZOY{n}{N}{\beta} &=  \sum_{\substack{1=t_0\leq  t_1 \leq \cdots \leq t_{n-1}\leq t_n =N\\ t_i \in \Z}}
A(t_0, \ldots, t_n)
\end{split}
\end{equation}
where
$$ A(t_0, \ldots, t_n) :=\int_{D(t_0,\ldots, t_n)} e^{\beta E(\phi)} \; d\phi.$$
In order to define the domain $D(t_0,\ldots, t_n)$ over which $\phi$ is integrated, recall that $\phi$ can be specified by an ordered set $0=s_0<s_1<\cdots< s_{n-1}<s_n=N$. Then $D(t_0,\ldots, t_n)$ is the set of $\phi$ whose associated ordered set also satisfies $s_{i}\in [t_i -1, t_i)$ for $1\leq i \leq n-1$.

On the event $S(u)^c$, we have
\begin{equation*}
e^{ \beta \sum_{j=1}^n (B^j(t_{j}) - B^{j}(t_{j-1}-1))-  2\beta nu }
\leq e^{\beta E(\phi)} \leq  e^{ \beta \sum_{j=1}^n (B^j(t_{j}) - B^{j}(t_{j-1}-1))+ 2\beta nu}.
\end{equation*}

Integrating the above inequality over $\phi\in D(t_0,\ldots, t_n)$ and noting that the Lebesgue measure of $D(t_0,\ldots, t_n)$ is bounded in $[(n!)^{-1},1]$, we find that on $S(u)^c$,

\begin{equation*}\label{eq:eq20}
(n!)^{-1} e^{ \beta \sum_{j=1}^n (B^j(t_{j}) - B^{j}(t_{j-1}-1))-  2\beta nu }
\leq A(t_1, \ldots, t_n) \leq e^{ \beta \sum_{j=1}^n (B^j(t_{j}) - B^{j}(t_{j-1}-1))+ 2\beta nu}.
\end{equation*}

Hence on $S(u)^c$,
\begin{equation*}
 	F_N(\bar B_N) - 2\beta nu -\log (n!)\leq \log \ZOY{n}{N}{\beta} \leq F_N(\bar B_N)+ 2\beta nu .
\end{equation*}
Thus from~\eqref{eq:bogus} we find that
\begin{equation*}
	\PP \bigg( \left|F_N(\bar B_N) - \log \ZOY{n}{N}{\beta} \right| >2\beta n u +\log(n!) \bigg) \leq
	\PP(S(u)) \leq K_1nN e^{-K_2 u^2}.
\end{equation*}
for all $u>0$ and $n, N\in\N$.
Setting $a=2\beta n u +\log(n!)$, we obtain the claimed result of this lemma.
\end{proof}

The proof of Proposition \ref{compareprop} follows immediate by combining the above Lemmas \eqref{firstmainlemma} and \eqref{lem:estimate}.

\subsection{Proof of Proposition \ref{asyprop}}

This proof is based off of the proof of Theorem 4.1.46 of \cite{BorCor} which establishes the GUE limiting distribution for the semi-discrete polymer with a constant ratio between $n$ and $t$. Both that result and the present result rely upon Theorem 5.2.10 in \cite{BorCor} which (specializing that result to the case of $a_1=\cdots=a_N=0$) yields:


\begin{theorem}\label{OConFredDet}
Fix $n\geq 1$ and $\tau \geq 0$, and $0<\delta<1$. Then,
\begin{equation*}
\E \left[e^{-u \ZOY{n}{\tau}{1}}\right] = \det(I+ K_{u})
\end{equation*}
where $K_u: L^2(C_0)\to L^2(C_0)$ is the operator defined by the kernel
\begin{equation*}\label{Kudef}
K_{u}(v,v') = \frac{1}{2\pi \iota}\int_{-\iota \infty + \delta}^{\iota \infty +\delta}ds \Gamma(-s)\Gamma(1+s) \frac{\Gamma(v)^n}{\Gamma(s+v)^n} \frac{ u^s e^{v\tau s+ \tau s^2/2}}{v+s-v'}
\end{equation*}
and $C_{0}$ is a positively oriented contour containing $0$ and such that $|v-v'|<\delta$  for all $v,v'\in C_{0}$.
\end{theorem}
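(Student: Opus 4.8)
The plan is to obtain Theorem~\ref{OConFredDet} as a direct specialization of \cite[Theorem 5.2.10]{BorCor}, so the work here is one of transcription and bookkeeping rather than new analysis. First I would recall \cite[Theorem 5.2.10]{BorCor} in its general form: it treats the semi-discrete polymer built from $n$ independent Brownian motions carrying arbitrary real drift parameters $\mathbf a=(a_1,\dots,a_n)$, with partition function at time $\tau$ denoted here by $\widetilde{\mathcal Z}$, and it asserts that $\E\big[e^{-u\widetilde{\mathcal Z}}\big]=\det(I+K_u)_{L^2(C_0)}$ with a kernel of the form
\begin{equation*}
K_u(v,v')=\frac{1}{2\pi\iota}\int_{\delta-\iota\infty}^{\delta+\iota\infty}ds\,\Gamma(-s)\Gamma(1+s)\,\prod_{m=1}^{n}\frac{\Gamma(a_m+v)}{\Gamma(a_m+s+v)}\cdot\frac{u^s e^{v\tau s+\tau s^2/2}}{v+s-v'},
\end{equation*}
where $C_0$ is a positively oriented contour enclosing the points $\{-a_m\}_{m=1}^{n}$ and having diameter less than $\delta\in(0,1)$, and the $s$-contour is the vertical line $\operatorname{Re}(s)=\delta$. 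The one point I would check carefully against \cite{BorCor} is that the entire $\mathbf a$-dependence sits in the $\Gamma$-ratio product and in which poles $C_0$ must enclose --- in particular that the Gaussian factor $e^{v\tau s+\tau s^2/2}$ carries no drift --- since that is exactly what makes the specialization clean.

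Second, I would set $a_1=\dots=a_n=0$. The driftless Brownian motions are precisely those defining $\ZOY{n}{\tau}{1}$, so $\widetilde{\mathcal Z}$ becomes $\ZOY{n}{\tau}{1}$; if \cite{BorCor} only states the formula for $\mathbf a$ in an open set avoiding the origin, then one passes to the limit $\mathbf a\to 0$ using continuity of both sides --- the left side by weak convergence of $\widetilde{\mathcal Z}$ together with the uniform integrability supplied by the Gaussian tails, the right side by locally uniform convergence of the Fredholm series. Under this substitution the product collapses, $\prod_{m=1}^{n}\frac{\Gamma(0+v)}{\Gamma(0+s+v)}=\frac{\Gamma(v)^n}{\Gamma(s+v)^n}$, the set $\{-a_m\}$ of enclosed poles becomes $\{0\}$, and the kernel together with its contour hypotheses reduce exactly to the statement of Theorem~\ref{OConFredDet}. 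I would also record the harmless relabeling that \cite{BorCor} states the result with ``$N$'' Brownian motions, which here plays the role of our $n$.

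Finally I would confirm the compatibility of the two contours, which is routine: taking $C_0$ to be a small positively oriented loop around $0$ of diameter less than $\delta$, the only singularity of the $s$-integrand that can approach the line $\operatorname{Re}(s)=\delta$ is the simple pole at $s=v'-v$, and $|v'-v|<\delta$ keeps it strictly to the left of that line, while the poles of $\Gamma(-s)$ at $s\in\Z_{\ge 0}$ and of $\Gamma(1+s)$ at $s\in\Z_{<0}$ are avoided and $1/\Gamma(s+v)^n$ is entire in $s$. The only genuine obstacle in the whole argument is the point flagged above: verifying that the hypotheses of \cite[Theorem 5.2.10]{BorCor} literally allow $a_m=0$, or otherwise supplying the short continuity/analytic-continuation argument in $\mathbf a$; everything else is direct substitution.
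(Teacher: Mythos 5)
Your proposal follows exactly the same route as the paper, which obtains Theorem~\ref{OConFredDet} by specializing \cite[Theorem 5.2.10]{BorCor} to the driftless case $a_1=\cdots=a_N=0$ (with $N$ there playing the role of $n$ here). In fact your writeup is somewhat more careful than the paper's one-line citation, since you also flag the collapse of the $\Gamma$-ratio product, the reduction of the enclosed pole set to $\{0\}$, the contour compatibility, and the possible need for a short continuity-in-$\mathbf{a}$ argument; none of this changes the approach, which is identical.
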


The above result for the Laplace transform leads to the desired asymptotic probability distribution by virtue of the following result which is given as Lemma 4.1.38 of \cite{BorCor}.

\begin{lemma}\label{problemma1}
Consider a one-parameter family of functions $\{f_t\}_{t\geq 0}$ mapping $\R\to [0,1]$ such that for each $t$, $f_t(x)$ is strictly decreasing in $x$ with a limit of $1$ at $x=-\infty$ and $0$ at $x=\infty$, and for each $\delta>0$, on $\R\setminus[-\delta,\delta]$ $f_t$ converges uniformly to ${\bf 1}(x\leq 0)$. Define the $r$-shift of $f_t$ as $f^r_t(x) = f_t(x-r)$.
Suppose that a one-parameter family of of random variables $X_t$ satisfies, for each $r\in \R$,
\begin{equation*}
\lim_{t\to \infty} \E[f^r_t(X_t)] = p(r)
\end{equation*}
for a continuous cumulative distribution function $p(r)$. Then $X_t$ converges weakly in distribution to a random variable $X$ which is distributed according to $\PP(X\leq r) = p(r)$.
\end{lemma}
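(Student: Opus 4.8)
The plan is to prove the stronger pointwise statement that $\PP(X_t\le r)\to p(r)$ for \emph{every} $r\in\R$. Since $p$ is assumed to be a continuous cumulative distribution function, every real number is a continuity point of $p$, and convergence of the distribution functions of $X_t$ at all continuity points of the limit is, by definition, weak convergence of $X_t$ to the random variable $X$ with $\PP(X\le r)=p(r)$. Thus the entire content is a two-sided squeeze on $\PP(X_t\le r)$: I will bound it above using the hypothesis $\lim_t\E[f^{r+\epsilon}_t(X_t)]=p(r+\epsilon)$ at a slightly larger shift, bound it below using $\lim_t\E[f^{r-\epsilon}_t(X_t)]=p(r-\epsilon)$ at a slightly smaller shift, and then let $\epsilon\downarrow 0$, invoking continuity of $p$.

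For the upper bound, fix $\epsilon>0$ and $\eta>0$. By the uniform convergence of $f_t$ to ${\bf 1}(x\le 0)$ on $\R\setminus[-\epsilon,\epsilon]$ there is $t_0$ with $f_t(y)\ge 1-\eta$ for all $y\le-\epsilon$ and all $t\ge t_0$. If $x\le r$ then $x-(r+\epsilon)\le-\epsilon$, so $f^{r+\epsilon}_t(x)=f_t(x-r-\epsilon)\ge 1-\eta$; since $f^{r+\epsilon}_t\ge 0$ this gives, for $t\ge t_0$,
\[ \E\!\left[f^{r+\epsilon}_t(X_t)\right]\ \ge\ \E\!\left[f^{r+\epsilon}_t(X_t)\,{\bf 1}(X_t\le r)\right]\ \ge\ (1-\eta)\,\PP(X_t\le r). \]
Letting $t\to\infty$ yields $p(r+\epsilon)\ge(1-\eta)\limsup_t\PP(X_t\le r)$; since $\eta>0$ was arbitrary, $p(r+\epsilon)\ge\limsup_t\PP(X_t\le r)$, and then $\epsilon\downarrow 0$ together with continuity of $p$ at $r$ gives $\limsup_t\PP(X_t\le r)\le p(r)$.

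For the lower bound, fix $\epsilon,\eta>0$ and use the uniform convergence on $\R\setminus[-\epsilon,\epsilon]$ to pick $t_1$ with $f_t(y)\le\eta$ for all $y\ge\epsilon$ and $t\ge t_1$. If $x>r$ then $x-(r-\epsilon)>\epsilon$, so $f^{r-\epsilon}_t(x)\le\eta$; combining this with $f_t\le 1$ gives, for $t\ge t_1$,
\[ \E\!\left[f^{r-\epsilon}_t(X_t)\right]\ =\ \E\!\left[f^{r-\epsilon}_t(X_t)\,{\bf 1}(X_t\le r)\right]+\E\!\left[f^{r-\epsilon}_t(X_t)\,{\bf 1}(X_t>r)\right]\ \le\ \PP(X_t\le r)+\eta. \]
Letting $t\to\infty$ gives $p(r-\epsilon)\le\liminf_t\PP(X_t\le r)+\eta$, and then $\eta\downarrow 0$, $\epsilon\downarrow 0$, and continuity of $p$ yield $\liminf_t\PP(X_t\le r)\ge p(r)$. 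Together with the upper bound this shows $\lim_t\PP(X_t\le r)=p(r)$ for every $r$, completing the argument.

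I do not expect a serious obstacle here: neither the strict monotonicity of the $f_t$ nor their stated limits at $\pm\infty$ enter this argument — those properties matter only for producing such families $f_t$ from Laplace-transform data downstream — and the only points needing care are getting the directions of the two $\epsilon$-shifted inequalities right and using continuity of $p$ to absorb the shifts in the limit. The one structural fact being invoked, that pointwise convergence of distribution functions at every continuity point of the limit is weak convergence, is applied here in its easiest form since $p$ is continuous on all of $\R$.
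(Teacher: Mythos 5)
Your argument is correct. Note, however, that the paper itself does not prove this lemma: it is quoted verbatim as Lemma~4.1.38 of the reference \cite{BorCor}, so there is no in-paper proof to compare against. Your two-sided squeeze is the natural and standard argument for this kind of ``approximation of an indicator'' statement, and it is in fact the same strategy used in \cite{BorCor}. Both bounds are set up correctly: in the upper bound you shift right by $\epsilon$ so that $\{x\le r\}$ lands in the region $y\le-\epsilon$ where $f_t\ge 1-\eta$, drop the nonnegative complementary term, and take limits; in the lower bound you shift left by $\epsilon$ so that $\{x>r\}$ lands in $y>\epsilon$ where $f_t\le\eta$, and bound $f_t\le 1$ on $\{x\le r\}$. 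Continuity of $p$ then absorbs the $\epsilon$-shifts, and since $p$ is continuous everywhere the pointwise convergence $\PP(X_t\le r)\to p(r)$ at every $r$ is exactly weak convergence. Your closing observation is also accurate: the strict monotonicity of $f_t$ and its limits at $\pm\infty$ play no role in this proof; only the values in $[0,1]$ and the uniform convergence away from the origin are used. (Those extra hypotheses are there so that the specific family $f_t(x)=e^{-e^{t^\mu x}}$, which arises from the Laplace transform, demonstrably satisfies the lemma's conditions.)
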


Let $\mu>0$ and $\kappa>0$ be defined as in the hypothesis of Proposition \ref{asyprop}.
Consider the functions $f_t(x) = e^{-e^{t^{\mu}x}}$.
Observe that this family of functions meets the criteria of Lemma \ref{problemma1}.
By Lemma \ref{problemma1}, if for each $r\in \R$ we can prove that
\begin{equation*}
\lim_{t\to \infty} \E\left[f_t^r\bigg(\frac{\log \ZOY{\lfloor t^{\alpha}\rfloor}{\beta^2 t}{1} - 2\beta t^{1-\kappa}}{t^{\mu}}\bigg) \right] = F_{\rm{GUE}}(\beta^{-1} r),
\end{equation*}
then it will follow that
\begin{equation*}
\lim_{t\to \infty} \PP\bigg(\frac{\log \ZOY{\lfloor t^{\alpha}\rfloor}{\beta^2 t}{1} - 2\beta t^{1-\kappa}}{t^{\mu}}\le r \bigg) = F_{\rm{GUE}}(\beta^{-1} r).
\end{equation*}

Observe that
if we define
\begin{equation}\label{udef}
u=e^{-2\beta t^{1-\kappa} - rt^{\mu}},
\end{equation}
then
\begin{equation*}
	f_t^r\bigg(\frac{\log \ZOY{\lfloor t^{\alpha}\rfloor}{\beta^2 t}{1} - 2\beta t^{1-\kappa}}{t^{\mu}}\bigg)
	=e^{-u\ZOY{t^{\alpha}}{\beta^2 t}{1}}.
\end{equation*}
Hence, in view of Theorem \ref{OConFredDet}, our proof reduces now to proving that for $u$ as in (\ref{udef}), $n=\lfloor t^{\alpha}\rfloor$ and $\tau = \beta^2 t$
\begin{equation}\label{prlimit}
	\lim_{t\to \infty} \det(I+ K_{u})  = F_{\rm{GUE}}(\beta^{-1} r)
\end{equation}
for each fixed $r\in\R$.

We prove~\eqref{prlimit} using a steepest descent analysis. This analysis follows exactly the approach employed in the proof of Theorem 4.1.46 in \cite{BorCor}. As such we only include the critical point analysis here presently. The necessary contour manipulations and tail bounds can readily be found in \cite{BorCor} and easily adapted.
In the below we replace $n=\lfloor t^{\alpha}\rfloor$ by $n=t^{\alpha}$ to make the notations simpler.
It is straightforward to check that the error coming from this change is negligible

Let us consider the kernel $K_u(v, v')$.
We scale the kernel by setting $v=t^{-\kappa}\tilde{v}$ and $v'=t^{-\kappa}\tilde v'$
and define $\tilde{K}_u(\tilde v,\tilde v') = K_u(v, v') t^{-\kappa}$. Then
$\det(I+ K_{u})= \det(I+ \tilde{K}_{u})$.
By changing the variables as $s=t^{-\kappa}(\tilde{\zeta}-\tilde{v})$ and $\tilde{v}=t^{\kappa}v$ in the formula of the kernel,
we have
\begin{equation}\label{zetaKeqn}
	\tilde{K}_{u}(\tilde v,\tilde v')  = \frac{1}{2\pi \iota}\int \frac{\pi t^{-\kappa}}{\sin (\pi t^{-\kappa} (\tilde v-\tilde \zeta))}\exp\left\{t^{\alpha} \left(G(\tilde v)-G(\tilde \zeta)\right)+ r t^{\frac{\alpha}{3}}(\tilde v-\tilde \zeta)\right\}\frac{d\tilde \zeta}{\tilde \zeta-\tilde v'}
\end{equation}
where
\begin{equation*}
G( z) = \log \Gamma(t^{-\kappa} z) - \beta^2 z^2/2 + 2\beta  z
\end{equation*}
and we have also replaced $\Gamma(-s)\Gamma(1+s) = \pi / \sin(-\pi s)$.
Here the  contour for $\tilde\zeta$ in the integral may be taken as $\tilde v + \tilde \delta +\iota \R$
for any fixed $\tilde\delta>0$
and the operator $\tilde{K}_u$ may be defined on $L^2(\tilde{C}_0)$
where $\tilde{C}_0$  is a positively oriented contour containing $0$ and such that $|\tilde v-\tilde v'|<\tilde \delta$  for all $\tilde v,\tilde v'\in \tilde{C}_{0}$.
The problem is now prime for steepest descent analysis of the integral defining the kernel above.

The idea of steepest descent is to find critical points for the argument of the function in the exponential, and then to deform contours so as to go close to the critical point. The contours should be engineered such that away from the critical point, the real part of the function in the exponential decays and hence as $t$ gets large, has negligible contribution. This then justifies localizing and rescaling the integration around the critical point. The order of the first non-zero derivative (here third order) determines the rescaling in $t$ which in turn corresponds with the scale of the fluctuations in the problem we are solving. It is exactly this third order nature that accounts for the emergence of Airy functions and hence the Tracy Widom (GUE) distribution.

The Digamma function is defined as $\Psi(z)=[\log \Gamma]'(z)$ and, along with its first two derivatives, have the following expansion for $|z|$ small:
\begin{equation*}
\Psi(z) = \frac{-1}{z} + O(1),\qquad \Psi'(z) = \frac{1}{z^2} + O(1), \qquad \Psi''(z) = -\frac{2}{z^3} + O(1).
\end{equation*}
Let us then record the first three derivatives of $G$ along with their large $t$ expansions (afforded by the above expansions)
\begin{eqnarray*}
G'(\tilde v) =&  \Psi(t^{-\kappa} \tilde v)t^{-\kappa} - \beta^2 \tilde v + 2\beta& = -\frac{1}{\tilde v} - \beta^2 \tilde v +2\beta + O(t^{-\kappa})\\
G''(\tilde v) = & \Psi'(t^{-\kappa} \tilde v)t^{-2\kappa}-\beta^2 &= \frac{1}{\tilde v^2} - \beta^2 +  O(t^{-2\kappa})\\
G'''(\tilde v) = & \Psi''(t^{-\kappa} \tilde v)t^{-3\kappa} &= -\frac{2}{\tilde v^3} +  O(t^{-3\kappa}).
\end{eqnarray*}

The critical point is the value of $\tilde v=\tilde v_c$ at which $G'(\tilde v_c) =0$. However, it suffices to choose $\tilde v_c$ such that
\begin{equation*}
 -\frac{1}{\tilde v_c} - \beta^2 \tilde v_c +2\beta =0
\end{equation*}
since then $G'(\tilde v_c) = O(t^{-\kappa})$. Solving the above gives $\tilde v_c = \beta^{-1}$.
This choice of $\tilde v_c$ implies that 
\begin{eqnarray*}
G'(\tilde v_c) &=& O(t^{-\kappa})\\
G''(\tilde v_c) &= &  O(t^{-2\kappa})\\
G'''(\tilde v_c) &= &  -2\beta^3 + O(t^{-3\kappa}).
\end{eqnarray*}
Note that by taking $\tilde\delta$ small enough, it is possible to deform the contour of~\eqref{zetaKeqn}
to pass the critical point $\tilde \zeta= \beta^{-1}$.

Therefore we may make the final change of variables to expand around $\tilde v_c$ by setting $\tilde v = \tilde v_c + t^{-\frac{\alpha}{3}} \hat{v}$. From Taylor approximation and the above bounds on the derivatives of $G$ we find that
\begin{equation*}
G(\tilde{v}) = G(\tilde v_c) -\frac{\beta^3}{3} t^{-\alpha} (\hat{v})^3 + lot
\end{equation*}
where $lot$ denotes lower order terms in $t$.
We also set $\tilde \zeta= \tilde v_c + t^{-\frac{\alpha}{3}} \hat{\zeta}$.
We may conclude from the above argument that (note that the constants $G(\tilde v_c)$ cancel)
\begin{equation*}\label{Gexp}
t^{\alpha} \left(G(\tilde v)-G(\tilde \zeta)\right) = -\frac{\beta^3}{3}(\hat{v})^3 + \frac{\beta^3}{3}(\hat{\zeta})^3 + o(1).
\end{equation*}
Now we can turn to the remaining parts of the kernel. Note that
\begin{equation*}
r t^{\frac{\alpha}{3}}(\tilde v-\tilde \zeta) = r (\hat{v}-\hat{\zeta}).
\end{equation*}
As for the other two terms in $K_u$ we have
\begin{equation*}
 t^{-\frac{\alpha}{3}} \frac{\pi t^{-\kappa}}{\sin (\pi t^{-\kappa} (\tilde v-\tilde \zeta))}  \to  \frac{1}{\hat{v}-\hat{\zeta}}, \qquad \frac{d\tilde \zeta}{\tilde \zeta-\tilde v'} =\frac{d\hat{\zeta}}{\hat{\zeta}-\hat{v'}}
\end{equation*}
where the $ t^{-\frac{\alpha}{3}}$ comes from the Jacobian associated with the change of variables from $\tilde{v}$ to $\hat{v}$.

Putting together the pieces we have that (modulo the necessary tail estimates and uniformity which can be
obtained as in the proof of Theorem 4.1.46 in \cite{BorCor})
\begin{equation*}
\lim_{t\to \infty} \det(I+K_u) = \det(I+\hat{K}_r)
\end{equation*}
where
\begin{equation*}
\hat{K}_{r}: L^2(C_v)\to L^2(C_v)
\end{equation*}
for $C_{v}$ a contour given by rays from the origin at angles $\pm 2\pi/3$ oriented to have increasing imaginary part. The operator $\hat{K}_r$ is defined in terms of its integral kernel
\begin{equation*}\label{hatKrdef}
\hat{K}_{r}(\hat{v},\hat{v'}) = \frac{1}{2\pi \iota} \int_{C_{\zeta}} \frac{e^{ - \frac{\beta^3}{3} \hat{v}^3 +r\hat{v}}}{e^{ - \frac{\beta^3}{3} \hat{\zeta}^3 +r\hat{\zeta}}} \frac{d\hat{\zeta}}{(\hat{v}-\hat{\zeta})(\hat{\zeta}-\hat{v'})},
\end{equation*}
where $C_{\zeta}$ is a contour given by rays from $d$ (for some $d>0$) at angles $\pm \pi/3$ oriented to have increasing imaginary part.
But it is known that (see Proof of Theorem 4.1.46 in \cite{BorCor})
\begin{equation*}
\det(I+\hat{K}_r) =  F_{\rm{GUE}}(\beta^{-1} r).
\end{equation*}
This proves~\eqref{prlimit} and hence completes the proof of the proposition.


\end{document}